\documentclass[oneside,english]{amsart}

\usepackage[T1]{fontenc}
\usepackage[utf8]{inputenc}
\usepackage{amsmath,amstext,amsthm,amssymb}
\usepackage{babel}

\theoremstyle{plain}
\newtheorem{theorem}{Theorem}[section]
\newtheorem{lemma}[theorem]{Lemma}
\newtheorem{proposition}[theorem]{Proposition}

\begin{document}

\title[Reduced-Trace-Zero Noncommutator]{A Reduced-Trace-Zero Element That Is Not a Commutator in a Central Division Algebra}

\author{Hau-Yuan Jang}
\address{Department of Mathematics, National Cheng Kung University, No.~1, Dasyue Road, Tainan 70101, Taiwan}
\email{L18121022@gs.ncku.edu.tw}

\subjclass[2020]{Primary 16K20; Secondary 16W60}
\keywords{central simple algebra, division algebra, commutator, reduced trace, skew Laurent series}

\begin{abstract}
We exhibit a finite-dimensional central division algebra, of degree
four over its center, together with an explicit element of reduced
trace zero that is not a single commutator. This answers negatively
the single-commutator question for finite-dimensional central division
algebras, the case left open by Amitsur and Rowen. The algebra is
a skew Laurent series ring, the element has only two terms, and the
proof uses the $x$-adic valuation, a first-obstruction lemma based
on two commuting involutions of the associated graded ring, and a
parity argument for quadratic forms over iterated Laurent series fields.
\end{abstract}

\maketitle

\section{Introduction}

A standard exercise in linear algebra states that a square matrix
over a field is a commutator exactly when its trace vanishes: if $A\in M_{n}(k)$
and $\operatorname{Tr}(A)=0$, then $A=XY-YX$ for suitable $X,Y$. The result
is due to Shoda and Albert--Muckenhoupt \cite{Shoda,AlbertMuckenhoupt}.
The analogous question for a central simple algebra $R$ is more delicate.
Every commutator in $R$ has reduced trace zero, and one asks whether
the converse holds: is every element of reduced trace zero a single
commutator?

Amitsur and Rowen \cite{AmitsurRowen} studied this systematically.
They showed that every element of reduced trace zero is a sum of at
most two commutators, and that in the non-division case every non-scalar
such element is already a single commutator. For division algebras
they left the question open, observing that it seemed to be governed
by quadratic forms.

The purpose of this note is to settle that case in the negative. We
construct a cyclic skew Laurent series division algebra 
\[
D=F((x;\sigma)),
\]
of degree $4$ over its center $Z(D)=K((x^{4}))$, and an explicit
element $A\in D$ of reduced trace zero that is not a single commutator.

\subsection*{Strategy of the proof}

The construction and the proof use four standard ingredients.  We work
in a skew Laurent series division algebra and use the trace pairing for
finite separable field extensions to understand the coefficient maps
arising from homogeneous commutators.  The $x$-adic filtration and its
associated graded algebra are then used to reduce a hypothetical
identity $A=[B,C]$ to a minimal leading-term configuration.  Finally,
the surviving coefficient equations give a representation problem for
quadratic forms over iterated Laurent series fields.

More precisely, the filtration argument allows us to choose a
commutator representation satisfying
\[
 v_x(B)+v_x(C)=v_x(A).
\]
The possible homogeneous leading terms can then be analysed in the skew
Laurent polynomial ring associated to the filtration.  After the odd
leading configurations have been excluded, an explicit calculation of
the next coefficients forces an identity of the form
\[
 rz=-\frac{2}{z(\alpha^2+\beta^2s)}
 \left(\operatorname{N}_{L/K}(P)+\iota s\,\operatorname{N}_{L/K}(Q)\right).
\]
The expression in parentheses is represented by the quadratic form
\[
 \langle 1,-s,\iota s,-\iota s^2\rangle,
\]
while the denominator contains a value of $\langle 1,s\rangle$.
Springer's theorem gives the required anisotropy and hence an even
$r$-adic valuation on the right-hand side, whereas $v_r(rz)=1$.
This parity contradiction proves the theorem.

For background on skew Laurent series division rings and their
valuations, see \cite[Example~1.8 and \S14]{LamNCR}.  For norm, trace,
and the nondegenerate trace pairing of a finite separable extension, see
\cite[Chapter~VI, \S5, especially Theorem~5.2]{Lang}; for filtered and
associated graded algebras and modules, see
\cite[Chapter~X, \S5]{Lang}.  The quadratic-form input is the form of
Springer's theorem recalled in
\cite[Chapter~VI, \S1, Proposition~1.9]{Lam}.

\section{Some observations}
\label{sec:observations}

We first recall the elementary facts about skew Laurent series that
will be used throughout, and then record several observations that
motivated the construction of the counterexample.  The latter are
included to explain how the example was found and are not needed in
the subsequent proof.

Let $F/K$ be a cyclic Galois extension with
\[
\operatorname{Gal}(F/K)=\langle \sigma\rangle
\]
of order $n$.  The skew Laurent series division ring
\[
D=F((x;\sigma))
\]
consists of formal Laurent series
\[
\sum_{m\ge m_0}a_mx^m,\qquad a_m\in F,
\]
with multiplication determined by
\[
xa=\sigma(a)x\qquad(a\in F).
\]
Thus
\[
x^ia=\sigma^i(a)x^i,
\qquad
(ax^i)(bx^j)=a\sigma^i(b)x^{i+j}.
\]
For background on skew Laurent series division rings, see
\cite[Example~1.8 and \S14]{LamNCR}.

Since $\sigma$ has order $n$, one has
\[
Z(D)=K((x^n)).
\]
Moreover, $[D:Z(D)]=n^2$, so $D$ is a central division algebra
of degree $n$.

The least exponent defines the $x$-adic valuation.  For
\[
0\ne d=\sum_{m\ge m_0}a_mx^m,\qquad a_{m_0}\ne0,
\]
put
\[
v_x(d)=m_0.
\]
Then
\[
v_x(dd')=v_x(d)+v_x(d')
\]
and
\begin{equation}
v_x([d,d'])\ge v_x(d)+v_x(d').
\label{eq:comm-val}
\end{equation}

We shall also use the reduced trace explicitly.  Put
\[
t=x^n,\qquad Z=K((t)),\qquad E=F((t)).
\]
Then $Z=Z(D)$, $E/Z$ is a cyclic extension of degree $n$, and
\[
D=\bigoplus_{j=0}^{n-1}x^jE
\]
as a right $E$-vector space.  The left regular representation on
this space gives the standard splitting representation
\[
D\otimes_ZE\simeq M_n(E),
\]
so the reduced trace of an element of $D$ is the ordinary matrix
trace in this representation.

Write
\[
A=\sum_m a_mx^m
  =\sum_{r=0}^{n-1} e_rx^r,
\qquad
e_r=\sum_q a_{qn+r}t^q\in E.
\]
With respect to the right $E$-basis
\[
1,x,\ldots,x^{n-1},
\]
left multiplication by $e_rx^r$ sends the $j$th basis line
$x^jE$ to $x^{j+r}E$, where the exponent is read modulo $n$.
Hence, if $r\ne0$, the corresponding matrix has zero diagonal and
therefore
\[
\operatorname{Trd}_{D/Z}(e_rx^r)=0.
\]

For $r=0$, left multiplication by $e_0$ is diagonal: since
\[
e_0x^j=x^j\sigma^{-j}(e_0),
\]
its diagonal entries are
\[
e_0,\sigma^{-1}(e_0),\ldots,\sigma^{-(n-1)}(e_0).
\]
Consequently
\[
\operatorname{Trd}_{D/Z}(e_0)
=
\operatorname{Tr}_{E/Z}(e_0).
\]
As $\sigma$ acts coefficientwise on $E=F((t))$, this gives
\begin{equation}
\operatorname{Trd}_{D/Z(D)}(A)
=
\sum_q
\operatorname{Tr}_{F/K}(a_{qn})t^q
=
\sum_{m\equiv0\pmod n}
\operatorname{Tr}_{F/K}(a_m)x^m.
\label{eq:trd}
\end{equation}

We now turn to the coefficient calculations that motivated the
example.  For homogeneous terms,
\[
[ax^i,bx^j]
=
\bigl(a\sigma^i(b)-\sigma^j(a)b\bigr)x^{i+j}.
\]
It is therefore natural, for $a\in F$ and $i,j\in\mathbb Z$, to
consider the $K$-linear map
\[
f_a^{i,j}:F\longrightarrow F,
\qquad
f_a^{i,j}(b)
=
a\sigma^i(b)-\sigma^j(a)b.
\]
Then
\[
[ax^i,bx^j]=f_a^{i,j}(b)x^{i+j}.
\]
Thus the possible coefficients of a homogeneous commutator with
fixed $a,i,j$ are precisely the elements of
$\operatorname{Im}f_a^{i,j}$.

The following elementary observation describes these images through
the trace pairing.  We use the standard nondegeneracy of
\[
(u,v)\longmapsto\operatorname{Tr}_{F/K}(uv)
\]
for finite separable extensions; see
\cite[Chapter~VI, \S5, Theorem~5.2]{Lang}.

\begin{lemma}
\label{lem:image-observation}
Let $a\in F^\times$ and $i,j\in\mathbb Z/n\mathbb Z$.  Put
\[
E_a^{i,j}
=
\{\lambda\in F:
  \sigma^i(\lambda)\sigma^{i+j}(a)=a\lambda\}.
\]
Then
\[
\operatorname{Im}f_a^{i,j}
=
(E_a^{i,j})^\perp
=
\left\{
u\in F:
\operatorname{Tr}_{F/K}(\lambda u)=0
\text{ for every }\lambda\in E_a^{i,j}
\right\}.
\]
\end{lemma}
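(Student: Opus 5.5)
The plan is to identify $\operatorname{Im}f_a^{i,j}$ as the orthogonal complement of the kernel of the adjoint of $f_a^{i,j}$ with respect to the trace form. The form $\langle u,v\rangle=\operatorname{Tr}_{F/K}(uv)$ is a nondegenerate symmetric $K$-bilinear form on the finite-dimensional space $F$ \cite[Chapter~VI, \S5, Theorem~5.2]{Lang}. Hence for any $K$-linear $f:F\to F$ with adjoint $f^*$ (defined by $\langle f(b),\lambda\rangle=\langle b,f^*(\lambda)\rangle$) one has
\[
\operatorname{Im}f=(\ker f^*)^\perp.
\]
This is standard: $\lambda\perp\operatorname{Im}f$ if and only if $f^*(\lambda)\perp F$, which by nondegeneracy means $f^*(\lambda)=0$. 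Then $(\operatorname{Im}f)^\perp=\ker f^*$, and taking perps again in finite dimension gives the displayed equality. So it suffices to compute $f^*$ for $f=f_a^{i,j}$ and show $\ker f^*=E_a^{i,j}$.

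To compute the adjoint I use $\sigma$-invariance of the trace, $\operatorname{Tr}_{F/K}(\sigma^k(w))=\operatorname{Tr}_{F/K}(w)$. For $b,\lambda\in F$ this gives
\[
\operatorname{Tr}_{F/K}\bigl(\lambda a\sigma^i(b)\bigr)
=\operatorname{Tr}_{F/K}\bigl(\sigma^{-i}(\lambda a)\,b\bigr),
\]
while the second term is already $\operatorname{Tr}_{F/K}(\lambda\sigma^j(a)b)$. Hence
\[
f^*(\lambda)=\sigma^{-i}(\lambda a)-\lambda\sigma^j(a).
\]
Applying the automorphism $\sigma^i$, the condition $f^*(\lambda)=0$ becomes $\lambda a=\sigma^i(\lambda)\sigma^{i+j}(a)$, which is exactly the defining condition of $E_a^{i,j}$. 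Since $\sigma^i$ is bijective, $\ker f^*=E_a^{i,j}$, and the lemma follows. The indices are well defined modulo $n$ because $\sigma$ has order $n$, so reading $i,j\in\mathbb Z/n\mathbb Z$ causes no ambiguity.

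No step here is a genuine obstacle. The only points needing care are the direction of the twist in the adjoint ($\sigma^{-i}$ versus $\sigma^{i}$) and checking that after applying $\sigma^i$ the equation matches the stated form of $E_a^{i,j}$ exactly, rather than a conjugate version with the roles of $a$ and $\sigma^{i+j}(a)$ swapped. I would verify this by writing out the trace identity term by term. The hypothesis $a\ne0$ is not actually needed for the equality itself.
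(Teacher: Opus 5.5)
Your proposal is correct and follows essentially the same route as the paper: it uses $\sigma$-invariance of the trace to rewrite $\operatorname{Tr}_{F/K}\bigl(\lambda f_a^{i,j}(b)\bigr)$ as $\operatorname{Tr}_{F/K}\bigl((\sigma^{-i}(\lambda a)-\lambda\sigma^j(a))b\bigr)$. This identifies $(\operatorname{Im}f_a^{i,j})^\perp$ with $E_a^{i,j}$, and taking orthogonal complements again under the nondegenerate finite-dimensional trace pairing gives the lemma.
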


\begin{proof}
Let $F^*=\operatorname{Hom}_K(F,K)$.  By the nondegeneracy of the
trace pairing, every element of $F^*$ is uniquely of the form
\[
u\longmapsto\operatorname{Tr}_{F/K}(\lambda u)
\]
for some $\lambda\in F$.  For $b\in F$,
\begin{align*}
\operatorname{Tr}_{F/K}
 \bigl(\lambda f_a^{i,j}(b)\bigr)
&=
\operatorname{Tr}_{F/K}
 \bigl(\lambda a\sigma^i(b)
       -\lambda\sigma^j(a)b\bigr)\\
&=
\operatorname{Tr}_{F/K}
 \bigl(
   (\sigma^{-i}(\lambda a)-\lambda\sigma^j(a))b
 \bigr).
\end{align*}
This vanishes for every $b\in F$ if and only if
\[
\sigma^{-i}(\lambda a)=\lambda\sigma^j(a),
\]
or equivalently
\[
\sigma^i(\lambda)\sigma^{i+j}(a)=a\lambda.
\]
Hence, under the trace-pairing identification $F\simeq F^*$,
the annihilator of $\operatorname{Im}f_a^{i,j}$ is precisely
$E_a^{i,j}$. Since the trace pairing is nondegenerate and
$\operatorname{Im}f_a^{i,j}$ is a $K$-subspace of $F$, taking
orthogonal complements once more gives
\[
\operatorname{Im}f_a^{i,j}
=
\bigl((\operatorname{Im}f_a^{i,j})^\perp\bigr)^\perp
=
(E_a^{i,j})^\perp.
\]
This proves the lemma.
\end{proof}

We now specialize to $n=4$.  Put
\[
L=F^{\sigma^2},
\qquad
\tau=\sigma^2.
\]
Thus $F/L$ is quadratic.  Lemma~\ref{lem:image-observation} gives
\begin{align}
\operatorname{Im}f_a^{2,2}
&=\ker\operatorname{Tr}_{F/L},
\label{eq:obs-22}\\
\operatorname{Im}f_a^{1,3}
&=\ker\operatorname{Tr}_{F/K},
\label{eq:obs-13}\\
\operatorname{Im}f_a^{2,0}
&=
\left\{
u\in F:
\operatorname{Tr}_{F/L}(a^{-1}u)=0
\right\},
\label{eq:obs-20}\\
\operatorname{Im}f_a^{1,1}
&=
\left\{
u\in F:
\operatorname{Tr}_{F/K}
 \left(\frac{u}{a\sigma(a)}\right)=0
\right\}.
\label{eq:obs-11}
\end{align}

The last two formulas show explicitly that, once a leading
coefficient has been fixed, the next coefficient may be constrained
to a trace hyperplane depending on that leading coefficient.

There is also a useful norm feature in the $(2,2)$ case.  Writing
$b=a\theta, \theta\in F$, one obtains
\begin{equation}
f_a^{2,2}(b)
=
a\tau(a)(\tau(\theta)-\theta)
=
\operatorname{N}_{F/L}(a)(\tau(\theta)-\theta).
\label{eq:obs-norm-22}
\end{equation}
Since $\operatorname{N}_{F/L}(a)\in L^\times$, this does not change
the full image:
\[
\operatorname{Im}f_a^{2,2}
=
\ker\operatorname{Tr}_{F/L}.
\]
In successive coefficient equations, an earlier equation may express
one coefficient in terms of another. Substituting this relation into
the next bilinear coefficient equation naturally produces quadratic
expressions; in the $(2,2)$ case, these include the norm
$\operatorname{N}_{F/L}(a)=a\tau(a)$.

The $(1,3)$ case has a complementary feature.  Although its total
image is simply $\ker\operatorname{Tr}_{F/K}$, taking trace only to
the intermediate quadratic field gives
\begin{equation}
\operatorname{Tr}_{F/L}\bigl(f_a^{1,3}(b)\bigr)
=
f_a^{3,1}(\tau(b))
+
f_{\tau(a)}^{3,1}(b).
\label{eq:obs-partial-trace-13}
\end{equation}
Thus information involving the leading coefficient $a$ survives
after passing to the intermediate trace.

These calculations suggested looking in the residue class
$-2$ modulo $4$ and, in particular, at a sparse element of the form
\begin{equation}
ux^{-2}+v.
\label{eq:historical-ansatz}
\end{equation}
For the degree-zero coefficient it is natural to require
\[
\operatorname{Tr}_{F/K}(v)=0
\qquad\text{but}\qquad
\operatorname{Tr}_{F/L}(v)\ne0.
\]
The first condition makes the degree-zero term invisible to the
reduced trace, whereas the second preserves information over the
quadratic intermediate field. Setting the coefficient of $x^{-1}$
equal to zero makes the first few commutator equations especially
rigid. The element constructed in the next section realizes precisely
this pattern.

\section{The algebra and the element}

\label{sec:counterexample}

Let 
\[
K_{0}=\mathbb{Q}(\iota)((s)),K=K_{0}((r)),\iota=\sqrt{-1}.
\]
Choose $y$ with $y^{4}=s$. Then $X^{4}-s$ is irreducible over $K$:
by the standard binomial irreducibility criterion, since $\iota\in K$
gives $-4=(1+\iota)^{4}\in K^{4}$, it suffices to check that $s\notin K^{\times2}$.
Indeed, suppose that $q^{2}=s$ for some $q\in K^{\times}$. Since
$v_{r}(s)=0$, we have 
\[
2v_{r}(q)=v_{r}(s)=0,
\]
and hence $v_{r}(q)=0$. Thus 
\[
q=a_{0}+a_{1}r+a_{2}r^{2}+\cdots
\]
for some $a_{j}\in K_{0}$ with $a_{0}\ne0$. Comparing the constant
coefficients in the equality $q^{2}=s$ gives 
\[
a_{0}^{2}=s
\]
in $K_{0}=\mathbb{Q}(\iota)((s))$, which is impossible because every
square in $K_{0}$ has even $s$-valuation, whereas $v_{s}(s)=1$.

Since $X^{4}-s$ is irreducible and $\iota\in K$, all four roots
$y,\iota y,-y,-\iota y$ lie in $F=K(y)$. Thus $F/K$ is Galois,
with 
\[
\operatorname{Gal}(F/K)=\langle\sigma\rangle,\sigma(y)=\iota y,
\]
where $\sigma$ has order $4$. Form 
\[
D=F((x;\sigma)),L=F^{\sigma^{2}}=K(z),z=y^{2},
\]
so $z^{2}=s$ and $\sigma(z)=-z$. Throughout, a bar denotes the nontrivial
automorphism of $L/K$, $\overline{z}=-z$, and $\operatorname{N}_{L/K}(\ell)=\ell\overline{\ell}$.

The element is 
\[
A=(1+z)x^{-2}+\frac{r}{2}z\in D.
\]
Its only coefficient in a degree divisible by $4$ is $\frac{r}{2}z$,
and 
\[
\operatorname{Tr}_{F/K}\left(\frac{r}{2}z\right)=\operatorname{Tr}_{L/K}\left(\operatorname{Tr}_{F/L}\left(\frac{r}{2}z\right)\right)=\operatorname{Tr}_{L/K}(rz)=rz+\overline{rz}=0,
\]
so $A$ has reduced trace zero by \eqref{eq:trd}.
\begin{lemma}
\label{lem:iota-nonsquare}
Neither $\iota$ nor $-\iota$ is a square in $\mathbb{Q}(\iota)$.
\end{lemma}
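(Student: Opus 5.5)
The plan is to argue by contradiction, writing a putative square root in the basis $\{1,\iota\}$ over $\mathbb{Q}$ and comparing coefficients. Suppose $(a+b\iota)^2=\epsilon\iota$ with $a,b\in\mathbb{Q}$ and $\epsilon\in\{1,-1\}$. Expanding gives
\[
a^2-b^2+2ab\,\iota=\epsilon\iota .
\]
Because $1,\iota$ is a $\mathbb{Q}$-basis of $\mathbb{Q}(\iota)$, comparing coefficients yields $a^2=b^2$ and $2ab=\epsilon$.

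The first equation gives $b=\pm a$, so $2ab=\pm 2a^2$. The second equation then forces $a^2=\tfrac12$. This is impossible for $a\in\mathbb{Q}$, since $\sqrt2$ is irrational. The contradiction shows that neither $\iota$ nor $-\iota$ is a square in $\mathbb{Q}(\iota)$.

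An alternative, more conceptual route uses norms. One has $\operatorname{N}_{\mathbb{Q}(\iota)/\mathbb{Q}}(\pm\iota)=1$, so a norm argument alone does not exclude a square root. Instead, if $\pm\iota$ were a square, then $\mathbb{Q}(\iota)$ would contain a primitive $8$th root of unity $\zeta_8$. But $[\mathbb{Q}(\zeta_8):\mathbb{Q}]=4>2$, again a contradiction. I will present the direct coefficient computation, since it is the shortest.

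No real obstacle is expected; the only point requiring care is handling both signs $\epsilon=\pm1$ simultaneously, which the computation above already does.
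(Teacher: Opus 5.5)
Your proof is correct and is essentially the paper's argument: you compare coefficients in the $\mathbb{Q}$-basis $1,\iota$ to get $b=\pm a$ and then $2a^2=1$ (the sign $2a^2=-1$ being impossible), which has no rational solution. The only cosmetic difference is that you handle both signs at once via $\epsilon$, whereas the paper treats $\iota$ first and then reduces $-\iota$ to it using $(\iota u)^2=-u^2$.
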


\begin{proof}
Suppose first that
\[
(a+b\iota)^2=\iota
\]
for some $a,b\in\mathbb{Q}$.  Comparing real and imaginary parts gives
\[
a^2-b^2=0,
\qquad
2ab=1.
\]
Thus $b=\pm a$.  If $b=a$, then
\[
2a^2=1,
\]
so $a^2=\frac12$, which is impossible for $a\in\mathbb{Q}$.
Indeed, if $a=p/q$ in lowest terms, then $2p^2=q^2$, forcing both
$p$ and $q$ to be even.  If $b=-a$, then
\[
-2a^2=1,
\]
which is also impossible.  Hence $\iota\notin\mathbb{Q}(\iota)^{\times2}$.

If $-\iota=u^2$ for some $u\in\mathbb{Q}(\iota)$, then
\[
(\iota u)^2=-u^2=\iota,
\]
contradicting the first part.  Thus
$-\iota\notin\mathbb{Q}(\iota)^{\times2}$ as well.
\end{proof}

We shall also need the following elementary lemma, which rules out
the odd leading configurations in \S\ref{sec:minimal}. 
\begin{lemma}
\label{lem:odd-obstruction} For $p=1,3$ and for all $a,b\in F$,
one has 
\[
1+z\ne f_{a}^{p,p}(b).
\]
\end{lemma}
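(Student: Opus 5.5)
The plan is to combine Lemma~\ref{lem:image-observation} with an explicit solution of the defining equation of $E_a^{p,p}$. This turns the statement into a non-representation problem for a four-dimensional quadratic form over $K$. That problem is then settled by Springer's theorem together with Lemma~\ref{lem:iota-nonsquare}.

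\emph{Step 1: reduce to a trace condition.} If $a=0$ then $f_a^{p,p}=0$, and $1+z\ne0$, so assume $a\ne0$. For $p=1,3$ we have $\sigma^{2p}=\tau$, so
\[
E_a^{p,p}=\{\lambda\in F:\ \sigma^p(\lambda)\tau(a)=a\lambda\}.
\]
The element $\lambda_0=1/(a\sigma^p(a))$ lies in $E_a^{p,p}$. Indeed $\sigma^p(\lambda_0)=1/(\sigma^p(a)\tau(a))$, so $\sigma^p(\lambda_0)/\lambda_0=a/\tau(a)$; this is the computation behind \eqref{eq:obs-11}. By Lemma~\ref{lem:image-observation} it therefore suffices to show
\[
\operatorname{Tr}_{F/K}\!\left(\frac{1+z}{a\sigma^p(a)}\right)\ne0 .
\]

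\emph{Step 2: compute the trace explicitly.} Put $N=a\sigma^p(a)$. Since $N\tau(N)=\operatorname{N}_{F/K}(a)\in K^\times$, we get
\[
\operatorname{Tr}_{F/L}\bigl((1+z)/N\bigr)=(1+z)\,M/\operatorname{N}_{F/K}(a),
\qquad M=\operatorname{Tr}_{F/L}(N)\in L .
\]
Writing $M=m_0+m_1z$ with $m_0,m_1\in K$, we have $\operatorname{Tr}_{L/K}((1+z)M)=2(m_0+m_1s)$. So we must rule out $m_0=-m_1s$.

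To make $M$ explicit, write $a=\alpha+\beta y$ with $\alpha,\beta\in L$, and let $\varepsilon=\pm\iota$ be defined by $\sigma^p(y)=\varepsilon y$. Then $\sigma^p$ restricts to the bar involution on $L$, and
\[
a\sigma^p(a)=\operatorname{N}_{L/K}(\alpha)+\varepsilon z\operatorname{N}_{L/K}(\beta)+(\text{an }L\text{-multiple of }y).
\]
Hence $M=2\bigl(\operatorname{N}_{L/K}(\alpha)+\varepsilon z\operatorname{N}_{L/K}(\beta)\bigr)$, which gives $m_0=2\operatorname{N}_{L/K}(\alpha)$ and $m_1=2\varepsilon\operatorname{N}_{L/K}(\beta)$. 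The forbidden relation $m_0=-m_1s$ becomes
\[
\operatorname{N}_{L/K}(\alpha)+\varepsilon s\,\operatorname{N}_{L/K}(\beta)=0,\qquad (\alpha,\beta)\ne(0,0).
\]
Since $\operatorname{N}_{L/K}$ is the form $\langle1,-s\rangle$, this says that $\langle 1,-s,\varepsilon s,-\varepsilon s^2\rangle$ is isotropic over $K$.

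\emph{Step 3: anisotropy, the main obstacle.} All coefficients of the form lie in $K_0$ and are $r$-adic units. By Springer's theorem for $K=K_0((r))$, isotropy over $K$ would force isotropy over $K_0$. Over $K_0=\mathbb{Q}(\iota)((s))$, the form is $\langle 1,-\varepsilon s^2\rangle\perp s\langle -1,\varepsilon\rangle$, and $s^2$ is a square. So Springer's theorem again reduces the question to the two residue forms $\langle1,-\varepsilon\rangle$ and $\langle-1,\varepsilon\rangle$ over $\mathbb{Q}(\iota)$. Each of these is isotropic only if $\varepsilon=\pm\iota$ is a square in $\mathbb{Q}(\iota)$, which Lemma~\ref{lem:iota-nonsquare} excludes. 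So the form is anisotropic, the relation in Step 2 forces $\alpha=\beta=0$, contradicting $a\ne0$, and this proves Lemma~\ref{lem:odd-obstruction}.

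The delicate points I expect are bookkeeping: keeping $\varepsilon$ correct for $p=3$, and checking that the $y$-component of $a\sigma^p(a)$ really has zero trace to $L$, which holds because $\operatorname{Tr}_{F/L}(y)=0$.
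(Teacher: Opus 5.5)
Your proof is correct. It shares its first step with the paper: both reduce, via the element $1/(a\sigma^p(a))\in E_a^{p,p}$ (equivalently, via $t=b/a$), to showing $\operatorname{Tr}_{F/K}\bigl((1+z)/(a\sigma^p(a))\bigr)\ne0$. After that the core argument takes a genuinely different route.

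The paper never splits $a$ over $L$. It passes to the leading $r$-coefficient $a_0=y^du(y)\in\mathbb{Q}(\iota)((y))$ and exhibits a nonzero coefficient in a $y$-exponent divisible by $4$. When $d$ is even this is the leading coefficient. When $d$ is odd it is the next one, whose vanishing would make $\iota$ or $-\iota$ a square in $\mathbb{Q}(\iota)$.

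You instead evaluate the trace in closed form. With your $a=\alpha+\beta y$ and $\varepsilon=\iota^p$,
\[
\operatorname{Tr}_{F/K}\left(\frac{1+z}{a\sigma^p(a)}\right)=\frac{4\bigl(\operatorname{N}_{L/K}(\alpha)+\varepsilon s\operatorname{N}_{L/K}(\beta)\bigr)}{\operatorname{N}_{F/K}(a)},
\]
and you finish with anisotropy of $\langle1,-s,\varepsilon s,-\varepsilon s^2\rangle$, applying Springer's theorem twice.

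What each route buys:
\begin{itemize}
\item The paper's route is elementary power-series bookkeeping with no appeal to Springer's theorem at this stage. Its reduction to the leading $r$-coefficient and its parity split on $d$ amount to your two Springer steps carried out by hand.
\item Your route gives an exact identity rather than a leading-order estimate, and it is more conceptual. For $p=1$ your form is precisely the form $\varphi$ of \S\ref{sec:parity}. For $p=3$ it is the image of $\varphi$ under the automorphism $\iota\mapsto-\iota$ of $K$.
\item Consequently, in your approach the odd-case obstruction and the final parity contradiction rest on one anisotropy statement, so a single proof serves both places.
\end{itemize}
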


\begin{proof}
We may assume $a\ne0$, since the assertion is clear for $a=0$. Put
$\zeta=\iota^{p}$ and $t=b/a$. If $1+z=f_{a}^{p,p}(b)$, then 
\[
\frac{1+z}{a\sigma^{p}(a)}=\sigma^{p}(t)-t.
\]
The right-hand side has trace zero from $F$ to $K$. Set 
\[
F_{0}=\mathbb{Q}(\iota)((y)),K_{0}=\mathbb{Q}(\iota)((s)),s=y^{4}.
\]
Then $F=F_{0}((r))$ and $K=K_{0}((r))$, and the trace $\operatorname{Tr}_{F/K}$
is obtained coefficientwise from $\operatorname{Tr}_{F_{0}/K_{0}}$. Write
$a=r^{e}a_{0}+O(r^{e+1})$ with $a_{0}\in F_{0}^{\times}$. Taking
the leading $r$-coefficient in the trace gives 
\begin{equation}
\operatorname{Tr}_{F_{0}/K_{0}}\left(\frac{1+y^{2}}{a_{0}\sigma^{p}(a_{0})}\right)=0.\label{eq:odd-trace-zero}
\end{equation}
We will show that this is impossible.

Write $a_{0}=y^{d}u(y)$, where $d\in\mathbb{Z}$ and $u(y)\in\mathbb{Q}(\iota)[[y]]^{\times}$.
Write 
\[
u(y)=u_{0}(1+\alpha y+\beta y^{2}+O(y^{3})),u_{0}\in\mathbb{Q}(\iota)^{\times},\alpha,\beta\in\mathbb{Q}(\iota).
\]
Since $\sigma^{p}(y)=\zeta y$ and $\zeta^{2}=-1$, we have 
\[
\frac{1+y^{2}}{a_{0}\sigma^{p}(a_{0})}=\iota^{-pd}y^{-2d}(1+y^{2})\bigl(u(y)u(\zeta y)\bigr)^{-1}.
\]
The trace from $F_{0}=\mathbb{Q}(\iota)((y))$ to $K_{0}=\mathbb{Q}(\iota)((y^{4}))$
is four times the sum of the terms whose $y$-exponent is divisible
by $4$. Consequently, to rule out vanishing, it is enough to exhibit
one nonzero coefficient whose $y$-exponent is divisible by $4$.

If $d$ is even, the term of degree $-2d$ has nonzero coefficient,
so the trace cannot vanish. Suppose $d$ is odd. Then $-2d+2$ is
divisible by $4$, and the coefficient of this degree is the coefficient
of $y^{2}$ in $(1+y^{2})(u(y)u(\zeta y))^{-1}$. A direct expansion
shows that this coefficient is 
\[
u_{0}^{-2}\bigl(1+((1+\zeta)^{2}-\zeta)\alpha^{2}\bigr).
\]
For $p=1$, this is $u_{0}^{-2}(1+\iota\alpha^{2})$; its vanishing
would force $\alpha^{2}=\iota$. For $p=3$, it is $u_{0}^{-2}(1-\iota\alpha^{2})$;
its vanishing would force $\alpha^{2}=-\iota$.   Both are impossible by
Lemma~\ref{lem:iota-nonsquare}.  Thus the trace in
\eqref{eq:odd-trace-zero} is nonzero, a contradiction.  
\end{proof}

\section{Reduction to a minimal commutator}

\label{sec:reduction}

We now use the filtration associated with the $x$-adic valuation.
For $m\in\mathbb Z$, put
\[
D_{\ge m}=\{d\in D:v_x(d)\ge m\}.
\]
Its associated graded ring is
\[
G=\operatorname{gr}D
=
\bigoplus_{m\in\mathbb Z}
D_{\ge m}/D_{\ge m+1}.
\]
For standard terminology concerning filtered and associated graded
algebras, see \cite[Chapter~X, \S5]{Lang}.

Writing $X$ for the image of $x$, we identify
\[
G\simeq F[X,X^{-1};\sigma],
\qquad
Xa=\sigma(a)X.
\]
If
\[
0\ne B=b_mx^m+b_{m+1}x^{m+1}+\cdots,
\qquad b_m\ne0,
\]
we write
\[
\operatorname{in}(B)=b_mX^m.
\]

Since $\sigma$ has order $4$, put
\[
T=X^4,
\qquad
E=K[T,T^{-1}]=Z(G).
\]
For homogeneous elements one has
\[
(aX^q)(bX^p)
=
a\sigma^q(b)X^{q+p},
\]
and therefore
\begin{equation}
[aX^q,bX^p]=0
\quad\Longleftrightarrow\quad
a\sigma^q(b)=b\sigma^p(a).
\label{eq:homogeneous-commuting}
\end{equation}

Put $\tau=\sigma^2$, so
\[
L=F^\tau,\qquad F=L\oplus Ly.
\]
For brevity, we write $v=v_x$ from now on.
This section proves that if $A=[B,C]$, then one may choose $B,C$
with
\[
v(B)+v(C)=-2=v(A).
\]
\subsection*{Homogeneous rank-two and maximal types}

Let $W=aX^{q}\in G$ be homogeneous and noncentral. Here $E[W]$ denotes
the $E$-subalgebra of $G$ generated by $W$. The words ``rank two''
and ``rank four'' below refer to the rank of this commutative $E$-algebra,
not to the homogeneous degree $q$ of $W$.

We call $W$ \emph{$L$-type} if $q\equiv0\pmod 4$ and $a\in L\setminus K$.
Since $T=X^{4}$ is a central unit and $L=K(z)$, one has 
\[
E[W]=E[a]=E[z]=E\oplus Ez.
\]
Thus $E[W]$ has rank $2$ over $E$.

We call $W$ \emph{$X^{2}$-type} if $q\equiv2\pmod 4$ and $a\tau(a)\in K^{\times}$.
Then 
\[
W^{2}=a\tau(a)T^{q/2}\in E^{\times},
\]
and the two homogeneous degrees $0$ and $q$ are distinct modulo
$4$. Hence 
\[
E[W]=E\oplus EW,
\]
again of rank $2$ over $E$.

All remaining noncentral homogeneous elements generate commutative
algebras of rank $4$ over $E$; we call these \emph{maximal homogeneous
types}. There are three possibilities. If $q$ is odd, then 
\[
\begin{aligned}W^{4} & =\operatorname{N}_{F/K}(a)T^{q}\in E^{\times},\\
E[W] & =E\oplus EW\oplus EW^{2}\oplus EW^{3}.
\end{aligned}
\]
because the four displayed powers have distinct degrees modulo $4$.
If $q\equiv0\pmod 4$ and $a\notin L$, then $K(a)=F$, since $L$
is the unique quadratic subfield of the cyclic quartic extension $F/K$,
and therefore 
\[
E[W]=E[a]=F[T,T^{-1}].
\]
Finally, suppose $q\equiv2\pmod 4$ and $a\tau(a)\notin K$. Then
$a\tau(a)\in L\setminus K$, so $E[a\tau(a)]=E[z]$, and 
\[
E[W]=E[z]\oplus E[z]W.
\]
The four elements $1,z,W,zW$ are $E$-linearly independent, by their
degrees modulo $4$ and the $K$-linear independence of $1,z$. Thus
this algebra also has rank $4$ over $E$.

The maximal terminology is justified by the following direct calculation. 
\begin{lemma}
\label{lem:maximal-centralizer} If $W$ is of maximal homogeneous
type, then 
\[
C_{G}(W)=E[W].
\]
\end{lemma}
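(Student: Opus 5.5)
The centralizer is a graded subring, since $G$ is graded and $W$ is homogeneous. So it suffices to show that every homogeneous $cX^p$ commuting with $W=aX^q$ lies in $E[W]$. The inclusion $E[W]\subseteq C_G(W)$ is clear. By \eqref{eq:homogeneous-commuting}, the condition $[aX^q,cX^p]=0$ reads
\[
a\,\sigma^q(c)=c\,\sigma^p(a).
\]
I fix $p$ modulo $4$; multiplying by powers of the central unit $T$ lets me replace $p$ by its residue. The plan is to show that, for each residue class $p$ occurring, the solutions $c$ form a one-dimensional $K$-space. Then I exhibit an element of $E[W]$ of degree $\equiv p$. Its coefficient is a nonzero solution, so every solution is a $K$-multiple of it, and hence lies in $E[W]$.

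\textbf{Case $q$ odd.} Here $\sigma^q$ generates the Galois group. For each $p$, the degree-$p$ part of $E[W]$ is spanned by a suitable $T^kW^j$ with $jq\equiv p$, which has a nonzero coefficient $c_0$. If $c$ is another solution, then $\lambda=c/c_0$ satisfies $\sigma^q(\lambda)=\lambda$, so $\lambda\in K$. Hence the degree-$p$ centralizer component equals $K c_0X^p\cdot K[T^{\pm1}]\subseteq E[W]$.

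\textbf{Case $q\equiv 0$ and $a\notin L$.} The condition becomes $a=\sigma^p(a)$, because $F$ is commutative. Since $K(a)=F$, the fixed field of $\sigma^p$ must be all of $F$, which forces $p\equiv0$. Then $c\in F$ is arbitrary, and the centralizer is $F[T^{\pm1}]=E[W]$.

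\textbf{Case $q\equiv2$ and $a\tau(a)\notin K$.} The condition is $a\tau(c)=c\sigma^p(a)$.
\begin{itemize}
\item For $p\equiv0$: the condition says $\tau(c)=c$, so $c\in L$. This matches the degree-$0$ part $E[z]=L[T^{\pm1}]$.
\item For $p\equiv2$: I take $c_0=a$ as a solution. The ratio $\lambda=c/a$ satisfies $\tau(\lambda)=\lambda$, so $\lambda\in L$. This matches the degree-$2$ part $E[z]W$.
\item For $p$ odd: I use the equation $a\tau(c)=c\sigma^p(a)$ and apply $\tau$ to it. This gives $\tau(a)c=\tau(c)\sigma^{p+2}(a)$. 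Multiplying the two relations yields
\[
a\tau(a)=\sigma^p(a)\sigma^{p+2}(a)=\sigma^p\bigl(a\tau(a)\bigr),
\]
assuming $c\neq0$. Since $p$ is odd, $\sigma^p$ restricts to the nontrivial automorphism of $L$. So this would force $a\tau(a)\in K$, a contradiction. Hence there is no centralizer in odd degrees.
\end{itemize}

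The main obstacle is the odd-degree exclusion in the last case, together with checking that the graded reduction is legitimate. The latter holds because commuting with a homogeneous element can be tested degree by degree. The remaining steps are direct bookkeeping.
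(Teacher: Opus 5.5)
Your proposal is correct and follows essentially the same route as the paper. Both arguments reduce to a homogeneous $cX^p$ and split into the three maximal types. Your ratio with a known solution matches the paper's use of $R=T^{-k}VW^{-j}$ in the odd case and of $b/a$ when $q\equiv 2$, and you exclude odd $p$ with the same apply-$\tau$-and-multiply trick. One cosmetic point: your announced plan that each solution space is one-dimensional over $K$ holds only for $q$ odd, since the spaces are $F$, $L$ and $aL$ in the other cases, but your case-by-case argument never actually relies on it.
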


\begin{proof}
Since $W$ is homogeneous, the centralizer $C_{G}(W)$ is graded:
if a finite sum of homogeneous terms commutes with $W$, then each
of its homogeneous terms commutes with $W$. It therefore suffices
to consider a homogeneous element $V=bX^{p}$ satisfying $VW=WV$.

Suppose first that $q$ is odd. There are unique $j\in\{0,1,2,3\}$
and $k\in\mathbb{Z}$ such that $p=4k+jq$. The degree-zero element
\[
R=T^{-k}VW^{-j}\in F
\]
commutes with $W$. By \eqref{eq:homogeneous-commuting}, this means
$\sigma^{q}(R)=R$. Since $q$ is odd, the fixed field of $\sigma^{q}$
is $K$. Thus $R\in K$ and $V=RT^{k}W^{j}\in E[W]$.

Suppose next that $q\equiv0\pmod 4$ and $a\notin L$. The commuting
relation is $a=\sigma^{p}(a)$. If $p\not\equiv0\pmod 4$, the fixed
field of $\sigma^{p}$ is either $K$ or $L$, contradicting $a\notin L$.
Hence $p\equiv0\pmod 4$ and $V\in F[T,T^{-1}]=E[W]$.

Finally suppose that $q\equiv2\pmod 4$ and put $h=a\tau(a)\in L\setminus K$.
The commuting relation is 
\begin{equation}
a\tau(b)=b\sigma^{p}(a).\label{eq:maximal-x2-centralizer}
\end{equation}
If $p$ is odd, division by $ab$ and application of $\tau$ show
that 
\[
\begin{aligned}\frac{\tau(b)}{b} & =\frac{\sigma^{p}(a)}{a},\\
\frac{b}{\tau(b)} & =\frac{\sigma^{p+2}(a)}{\tau(a)}.
\end{aligned}
\]
Multiplying these identities gives $\sigma^{p}(h)=h$, impossible
because $h\in L\setminus K$ and an odd power of $\sigma$ acts nontrivially
on $L$. Thus $p$ is even. If $p\equiv0\pmod 4$, equation \eqref{eq:maximal-x2-centralizer}
gives $\tau(b)=b$, so $b\in L$. If $p\equiv2\pmod 4$, it gives
$\tau(b/a)=b/a$, so $b/a\in L$. In the first case $V\in E[z]$,
and in the second $V\in E[z]W$. Hence in either case $V\in E[W]$. 
\end{proof}
We shall also need the simultaneous centralizer in the two quadratic
configurations that survive below. 
\begin{lemma}
\label{lem:biquadratic-centralizer} Let $w=cX^{n}$ with $n\equiv2\pmod 4$
and $c\tau(c)\in K^{\times}$. Then 
\[
C_{G}(z,w)=E[z,w]=E[z]\oplus E[z]w.
\]
\end{lemma}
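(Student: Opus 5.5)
The plan is to argue exactly as in Lemma~\ref{lem:maximal-centralizer}. Because $z$ and $w$ are both homogeneous, the simultaneous centralizer $C_G(z,w)$ is a graded subspace of $G$. So it is enough to determine which homogeneous elements $V=bX^{p}$ commute with both $z$ and $w$, and then to check the inclusion $E[z,w]\subseteq C_G(z,w)$ together with the stated direct-sum decomposition.

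\emph{Commuting with $z$.} Apply \eqref{eq:homogeneous-commuting} with $z=zX^0$. The relation $[bX^p,z]=0$ becomes $b\sigma^p(z)=bz$, that is, $\sigma^p(z)=z$. Since $\sigma(z)=-z$, this holds exactly when $p$ is even. Hence only even degrees $p$ can occur.

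\emph{Commuting with $w$.} By \eqref{eq:homogeneous-commuting}, the relation $[bX^p,cX^n]=0$ reads $b\sigma^p(c)=c\sigma^n(b)=c\tau(b)$, using $n\equiv2\pmod 4$. There are two even cases.
\begin{itemize}
\item If $p\equiv0\pmod4$, the relation becomes $bc=c\tau(b)$. Since $c\ne0$, this gives $b\in L$, so $V\in L[T,T^{-1}]=E[z]$.
\item If $p\equiv2\pmod4$, the relation becomes $b\tau(c)=c\tau(b)$, i.e.\ $\tau(b/c)=b/c$. Hence $b/c\in L$, and then $V=(b/c)\,X^{p-n}\,w$ up to reordering. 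Because $p-n\equiv0\pmod 4$, the factor $X^{p-n}$ is a power of $T$, which is central. Therefore $V\in E[z]w$.
\end{itemize}

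For the reverse inclusion, $z$ commutes with $w$, since $z\sigma^n(c)$ and $c\sigma^n(z)$ both equal $zc$ when $n$ is even. Hence $E[z,w]$ is commutative and lies in the centralizer.

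\emph{Direct sum.} Since $w^2=c\tau(c)T^{n/2}\in E^\times$ and $w$ commutes with $z$, we get $E[z,w]=E[z]+E[z]w$. This sum is direct because the homogeneous degrees of the two pieces differ modulo $4$ (namely $0$ versus $2$), and $E[z]=E\oplus Ez$.

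I do not expect a real obstacle here. The only care needed is the grading argument: a non-homogeneous element commutes with a homogeneous one if and only if each of its homogeneous components does, and this is the same point already used in Lemma~\ref{lem:maximal-centralizer}. Note also that the hypothesis $c\tau(c)\in K^\times$ is used only to identify $E[z,w]$ with the span $E[z]\oplus E[z]w$. The centralizer computation itself holds for any nonzero $c$.
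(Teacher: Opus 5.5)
Your proof is correct and is essentially the paper's argument: you reduce to homogeneous $V=bX^p$, get $p$ even from $[V,z]=0$, and split into $p\equiv0$ and $p\equiv2\pmod 4$ to land in $E[z]$ or $E[z]w$. The reverse inclusion then comes from $zw=wz$ and $w^2\in E^\times$, exactly as in the paper. One harmless slip: since $z$ has degree $0$, the relation $zw=wz$ reads $zc=c\sigma^n(z)$, not $z\sigma^n(c)=c\sigma^n(z)$ (indeed $z\sigma^n(c)=z\tau(c)\ne zc$ when $c\notin L$).
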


\begin{proof}
Again it suffices to consider a homogeneous term $V=bX^{p}$. Commuting
with $z$ forces $p$ to be even, because $\sigma^{p}(z)=z$ exactly
for even $p$. If $p\equiv0\pmod 4$, the relation $Vw=wV$ becomes
$\tau(b)=b$, so $b\in L$. If $p\equiv2\pmod 4$, it becomes $c\tau(b)=b\tau(c)$,
hence $b/c\in L$. Thus every homogeneous term in the simultaneous
centralizer lies respectively in $E[z]$ or in $E[z]w$. The reverse
inclusion is immediate, because $z$ and $w$ commute and $w^{2}=c\tau(c)T^{n/2}\in E^{\times}$. 
\end{proof}

\subsection*{A first-obstruction lemma}
\begin{lemma}
\label{lem:obstruction} Let 
\[
u=b_{0}X^{m},w=c_{0}X^{n}
\]
be commuting homogeneous elements of $G$ such that 
\[
u^{2},w^{2}\in E^{\times}.
\]
Suppose that the simultaneous centralizer 
\[
H:=C_{G}(u,w)
\]
is commutative. Suppose $N\ge1$ and $b_{1},\dots,b_{N}$, $c_{1},\dots,c_{N}$
in $F$ satisfy 
\[
R_{k}:=\sum_{i+j=k}\bigl(b_{i}\sigma^{m+i}(c_{j})-c_{j}\sigma^{n+j}(b_{i})\bigr)=0\text{ for }0\le k\le N-1.
\]
Then 
\[
R_{N}\in\bigl\{\xi\sigma^{m+N}(c_{0})-c_{0}\sigma^{n}(\xi):\xi\in F\bigr\}+\bigl\{ b_{0}\sigma^{m}(\eta)-\eta\sigma^{n+N}(b_{0}):\eta\in F\bigr\}.
\]
\end{lemma}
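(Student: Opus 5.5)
The plan is to recognise $R_N$ as a graded piece of a commutator in $G$ and then decompose $G$ into joint eigenspaces of two commuting involutions. Put $\beta_i=b_iX^{m+i}$ and $\gamma_j=c_jX^{n+j}$, with $\beta_0=u$ and $\gamma_0=w$. Since $[b_iX^{m+i},c_jX^{n+j}]=(b_i\sigma^{m+i}(c_j)-c_j\sigma^{n+j}(b_i))X^{m+n+i+j}$, we have
\[
R_kX^{m+n+k}=\sum_{i+j=k}[\beta_i,\gamma_j].
\]
The two sets in the conclusion are exactly the coefficients of $[\xi X^{m+N},w]$ and of $[u,\eta X^{n+N}]$. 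So the claim is that the degree-$(m+n+N)$ component of $[u+\beta_1+\cdots,\,w+\gamma_1+\cdots]$ lies in $[G_{m+N},w]+[u,G_{n+N}]$, where $G_d$ denotes the homogeneous part of degree $d$.

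Because $u^2,w^2\in E^\times$ are central and $uw=wu$, the maps $\phi_u(g)=ugu^{-1}$ and $\phi_w(g)=wgw^{-1}$ are commuting graded automorphisms of $G$ with $\phi_u^2=\phi_w^2=\mathrm{id}$. We are in characteristic $0$, so $G=\bigoplus_{\varepsilon,\delta=\pm1}G^{\varepsilon,\delta}$ is the joint eigenspace decomposition. This decomposition respects degree, and $G^{1,1}=H$. If $g\in G^{\varepsilon,\delta}$ then $ug=\varepsilon gu$, so $[u,g]=(\varepsilon-1)gu$. Hence for $\varepsilon=-1$ we get $g=[u,-\tfrac12 gu^{-1}]$, where $gu^{-1}$ has degree lowered by $m$. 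Similarly, every $g\in G^{\varepsilon,-1}$ equals $[-\tfrac12 w^{-1}g,w]$, which is a commutator of the required shape. Therefore every homogeneous element of degree $m+n+N$ outside $H$ already lies in the target set. It suffices to prove that the $H$-component $\pi(R_NX^{m+n+N})$ vanishes, where
\[
\pi=\tfrac14(1+\phi_u)(1+\phi_w)
\]
is the projection onto $H$.

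For the vanishing, first note that $\pi\circ\phi_u=\pi=\pi\circ\phi_w$, so $\pi([u,g])=\pi(ug-\phi_u(g)u)$. Since $u$ is central in $H$ and $\pi$ is $H$-linear, this gives $\pi([u,g])=u\pi(g)-u\pi(g)=0$, and likewise $\pi([g,w])=0$. This kills the terms with $i=0$ or $j=0$ in $R_N$. Since $[G^{\varepsilon,\delta},G^{\varepsilon',\delta'}]\subseteq G^{\varepsilon\varepsilon',\delta\delta'}$, the cross terms $[\beta_i,\gamma_j]$ with $i,j\ge1$ have an $H$-component only from pairs of components in the same eigenspace. On $G^{1,1}\times G^{1,1}$ this vanishes because $H$ is commutative; this is exactly where the hypothesis on $H$ enters.

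For the remaining same-eigenspace pairs, say both in $G^{-1,\delta}$, write $x=[u,x']$ as above and apply the Jacobi identity. This gives
\[
\pi([x,y])=\pi([u,[x',y]])-\pi([x',[u,y]])=-\pi([x',[u,y]]),
\]
and $[u,y]=-2yu$. I then intend to feed in the lower relations $R_k=0$ for $k<N$, inductively in $k$. These give $[u,\gamma_k]+[\beta_k,w]=-\sum_{0<i,j,\,i+j=k}[\beta_i,\gamma_j]$, which lets me rewrite the non-$H$ parts of $\gamma_k$ and $\beta_k$ in terms of lower-order data. The aim is to show that the total $H$-projection of the cross terms telescopes to zero. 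An alternative, which I would try first because it is more conceptual, is to conjugate $B$ and $C$ successively by units $1+(\text{higher order})$ so as to move the non-$H$ parts of $\beta_k,\gamma_k$ ($k<N$) into $H$ while preserving $R_k=0$ for $k<N$. Each such conjugation is solvable exactly because $\operatorname{ad}_u$ and $\operatorname{ad}_w$ are surjective off $H$, and it changes $R_N$ only by elements of the target set. After normalization all cross terms lie in the commutative $H$ and vanish. This bookkeeping of the cross terms, checking that the normalization changes $R_N$ only by admissible elements, is the step I expect to be the main obstacle.
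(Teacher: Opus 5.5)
\textbf{There is a genuine gap: the key step is never proved.}

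Your reformulation is correct. In degree $m+n+N$ the target set is exactly $\ker\pi$, so the lemma is equivalent to
$\pi\bigl(\sum_{0<i<N}[\beta_i,\gamma_{N-i}]\bigr)=0$.
But that vanishing is the entire content of the lemma, and you do not establish it. The Jacobi/telescoping route stops at a statement of intent. The conjugation route is the right one; it is in fact the paper's proof. However, its two load-bearing points are either mis-justified or left as ``the main obstacle''.

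\emph{First point: solvability of each normalization step.} One unit must conjugate both entries. So at step $j$ you need a single $\chi_j$, homogeneous of degree $j$, with both $\beta_j+[\chi_j,u]\in H$ and $\gamma_j+[\chi_j,w]\in H$. Surjectivity of the adjoint maps does not give this, and it is not even true as stated. The map $[\,\cdot\,,u]$ has image $G^{-1,1}\oplus G^{-1,-1}$, and $[\,\cdot\,,w]$ has image $G^{1,-1}\oplus G^{-1,-1}$. Hence the $G^{1,-1}$-part of $\beta_j$ and the $G^{-1,1}$-part of $\gamma_j$ cannot be changed at all, and on $G^{-1,-1}$ one $\chi$ must correct both simultaneously. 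What rescues this is the relation $R_j=0$, used in increasing order of $j$:
\begin{itemize}
\item Suppose $\beta_i,\gamma_i\in H$ for $1\le i<j$. Commutativity of $H$ kills the cross terms of order $j$, so $R_j=0$ becomes $[\beta_j,w]+[u,\gamma_j]=0$.
\item Write $g^{\pm,\pm}$ for the component of $g$ in $G^{\pm1,\pm1}$. The eigencomponents of that relation are $\beta_j^{+,-}=0$, $\gamma_j^{-,+}=0$ and $\beta_j^{-,-}w=\gamma_j^{-,-}u$.
\item These are exactly the compatibility conditions under which $\chi_j=-\tfrac12(\beta_j^{-,+}+\beta_j^{-,-})u^{-1}-\tfrac12\gamma_j^{+,-}w^{-1}$ does both jobs.
\end{itemize}

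\emph{Second point: $R_N$ is unchanged by the conjugation, not merely changed by admissible elements.} So there is no bookkeeping to do. Take $B=\sum_{i\le N}b_ix^{m+i}$ and $C=\sum_{j\le N}c_jx^{n+j}$ in $D$, and $U=1+\chi$ with $v(\chi)\ge 1$. Then $U[B,C]U^{-1}-[B,C]=[\chi,[B,C]]U^{-1}$ has valuation $>m+n+N$, because $[B,C]$ vanishes below degree $m+n+N$. Hence $R_0,\dots,R_N$ are all preserved exactly, while the degree-$(m+j)$ and degree-$(n+j)$ coefficients change by $[\chi_j,u]$ and $[\chi_j,w]$. The paper does the same computation in $G[\varepsilon]/(\varepsilon^{N+1})$.

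After normalizing $j=1,\dots,N-1$, every cross term of order $N$ lies in the commutative $H$ and vanishes. This leaves $R_NX^{m+n+N}=[\beta_N',w]+[u,\gamma_N']$, which is the conclusion directly. Your $\pi$-reduction is therefore not needed.
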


\begin{proof}
Put 
\[
p_{i}=b_{i}X^{m+i},q_{i}=c_{i}X^{n+i},
\]
and work in $G[\varepsilon]/(\varepsilon^{N+1})$ with 
\[
\begin{aligned}\mathcal{P} & =u+\sum_{i=1}^{N}p_{i}\varepsilon^{i},\\
\mathcal{Q} & =w+\sum_{j=1}^{N}q_{j}\varepsilon^{j}.
\end{aligned}
\]
The coefficient identities give 
\begin{equation}
\begin{aligned}{}[\mathcal{P},\mathcal{Q}] & =\omega_{N}\varepsilon^{N},\\
\omega_{N} & =R_{N}X^{m+n+N}.
\end{aligned}
\label{eq:elementary-obstruction-commutator}
\end{equation}

Set 
\[
\begin{aligned}\vartheta_{u} & =\operatorname{Ad}_{u}:G\longrightarrow G,\\
\vartheta_{u}(g) & =ugu^{-1},
\end{aligned}
\]
and similarly 
\[
\begin{aligned}\vartheta_{w} & =\operatorname{Ad}_{w}:G\longrightarrow G,\\
\vartheta_{w}(g) & =wgw^{-1}.
\end{aligned}
\]
Thus $\operatorname{Ad}_{u}$ and $\operatorname{Ad}_{w}$ denote conjugation by $u$
and $w$, respectively. Since $u^{2}$ and $w^{2}$ are central and
$u$ commutes with $w$, the maps $\vartheta_{u}$ and $\vartheta_{w}$
are commuting involutions. As $2\in E^{\times}$, they give a simultaneous
eigenspace decomposition 
\[
G=\bigoplus_{\delta_{1},\delta_{2}\in\{\pm1\}}G_{\delta_{1},\delta_{2}},
\]
where 
\[
G_{\delta_{1},\delta_{2}}=\{g\in G:\vartheta_{u}(g)=\delta_{1}g\text{ and }\vartheta_{w}(g)=\delta_{2}g\}.
\]
The corresponding projections are 
\[
\pi_{\delta_{1},\delta_{2}}=\frac{1}{4}(1+\delta_{1}\vartheta_{u})(1+\delta_{2}\vartheta_{w}).
\]
The common $+1$ eigenspace is 
\[
G_{+,+}=C_{G}(u,w)=H,
\]
and is commutative by hypothesis.

We normalize the coefficients of $\mathcal{P}$ and $\mathcal{Q}$
successively. Suppose, for some $1\le j<N$, that all coefficients
of orders below $j$ lie in $H$. Since these lower coefficients commute
with one another, the coefficient of $\varepsilon^{j}$ in $[\mathcal{P},\mathcal{Q}]=0\pmod{\varepsilon^{N}}$
is 
\begin{equation}
[p_{j},w]+[u,q_{j}]=0.\label{eq:order-j-linearized}
\end{equation}
Write $p_{j}^{\delta_{1},\delta_{2}}=\pi_{\delta_{1},\delta_{2}}(p_{j})$
and similarly for $q_{j}$. For $g\in G_{\delta_{1},\delta_{2}}$
one has 
\[
\begin{aligned}{}[g,u] & =(1-\delta_{1})gu,\\{}
[g,w] & =(1-\delta_{2})gw,\\{}
[u,g] & =(\delta_{1}-1)gu.
\end{aligned}
\]
Thus the $(\delta_{1},\delta_{2})$ component of \eqref{eq:order-j-linearized}
is 
\begin{equation}
(1-\delta_{2})p_{j}^{\delta_{1},\delta_{2}}w+(\delta_{1}-1)q_{j}^{\delta_{1},\delta_{2}}u=0.\label{eq:eigenspace-linearized}
\end{equation}
It follows that 
\[
\begin{aligned}q_{j}^{-,+} & =0,\\
p_{j}^{+,-} & =0,\\
p_{j}^{-,-}w & =q_{j}^{-,-}u.
\end{aligned}
\]
Define 
\[
\begin{aligned}\chi_{j}^{-,+} & =-\frac{1}{2}p_{j}^{-,+}u^{-1},\\
\chi_{j}^{+,-} & =-\frac{1}{2}q_{j}^{+,-}w^{-1},\\
\chi_{j}^{-,-} & =-\frac{1}{2}p_{j}^{-,-}u^{-1},\\
\chi_{j} & =\chi_{j}^{-,+}+\chi_{j}^{+,-}+\chi_{j}^{-,-}.
\end{aligned}
\]
The elements $u^{\pm1}$ and $w^{\pm1}$ belong to $G_{+,+}$, so
each displayed $\chi_{j}^{\delta_{1},\delta_{2}}$ lies in the indicated
eigenspace. The preceding relations now give 
\[
\begin{aligned}p_{j}+[\chi_{j},u] & \in H,\\
q_{j}+[\chi_{j},w] & \in H.
\end{aligned}
\]
For the $(-,-)$ component, for example, 
\[
2\chi_{j}^{-,-}w=-p_{j}^{-,-}u^{-1}w=-q_{j}^{-,-},
\]
where we used $p_{j}^{-,-}w=q_{j}^{-,-}u$ and $uw=wu$.

Work in the truncated ring $G[\varepsilon]/(\varepsilon^{N+1})$.
Since $\chi_{j}\varepsilon^{j}$ is nilpotent, $1+\chi_{j}\varepsilon^{j}$
is a unit. More precisely, 
\[
(1+\chi_{j}\varepsilon^{j})^{-1}=\sum_{k=0}^{\left\lfloor \frac{N}{j}\right\rfloor }(-1)^{k}\chi_{j}^{k}\varepsilon^{kj}.
\]
In particular, 
\[
(1+\chi_{j}\varepsilon^{j})^{-1}=1-\chi_{j}\varepsilon^{j}+O(\varepsilon^{j+1}).
\]
Conjugate both $\mathcal{P}$ and $\mathcal{Q}$ by $1+\chi_{j}\varepsilon^{j}$.
Since 
\[
\begin{aligned} & (1+\chi_{j}\varepsilon^{j})\mathcal{P}(1+\chi_{j}\varepsilon^{j})^{-1}\\
 & =u+\sum_{i<j}p_{i}\varepsilon^{i}+(p_{j}+[\chi_{j},u])\varepsilon^{j}+O(\varepsilon^{j+1}),
\end{aligned}
\]
and similarly for $\mathcal{Q}$, this conjugation leaves all coefficients
of orders below $j$ unchanged and replaces the coefficients of order
$j$ by 
\[
p_{j}+[\chi_{j},u],q_{j}+[\chi_{j},w].
\]
It also leaves the right-hand side of \eqref{eq:elementary-obstruction-commutator}
unchanged modulo $\varepsilon^{N+1}$, because $j\ge1$. Repeating
this for $j=1,\dots,N-1$, we may therefore assume that 
\[
p_{i},q_{i}\in H\text{ for }1\le i<N
\]
while \eqref{eq:elementary-obstruction-commutator} still holds with
the same $\omega_{N}$.

Since $H$ is commutative, all commutators among these lower coefficients
vanish. Taking the coefficient of $\varepsilon^{N}$ now gives 
\[
\omega_{N}=[p_{N}',w]+[u,q_{N}']
\]
for some $p_{N}',q_{N}'\in G$ (the coefficients of order $N$ after
the successive conjugations). Taking the homogeneous component of
degree $m+n+N$, only the degree-$m+N$ part of $p_{N}'$ and the
degree-$n+N$ part of $q_{N}'$ contribute. Writing these parts as
$\xi X^{m+N}$ and $\eta X^{n+N}$ gives exactly the two sets in the
statement. 
\end{proof}

\subsection*{The leading gap is impossible}
\begin{proposition}
\label{prop:gap} If $A=[B,C]$ and $(B,C)$ is chosen with $d=v(B)+v(C)$
maximal, then $d=-2$. 
\end{proposition}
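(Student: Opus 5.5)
The plan is to argue by contradiction from the leading terms. By \eqref{eq:comm-val}, every representation $A=[B,C]$ has $v(B)+v(C)\le v(A)=-2$. So $d$ is bounded above, and a maximal choice exists. Assume $d<-2$ and put $u=\operatorname{in}(B)=b_0X^m$, $w=\operatorname{in}(C)=c_0X^n$ with $m+n=d$. The degree-$d$ component of $[B,C]$ is the commutator $[u,w]$ computed in $G$, and it must vanish, so $u$ and $w$ commute.

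The first step is to use maximality to exclude degenerate leading configurations. If $u=\lambda T^kw^j$ with $\lambda\in K$, we replace $B$ by $B-\lambda t^kC^j$. This element commutes with $C$, so the commutator is unchanged, while $v(B)$ strictly increases, contradicting maximality. The same works with the roles of $B$ and $C$ exchanged. In particular neither $u$ nor $w$ is central, $u\notin E[w]$, and $w\notin E[u]$.

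If $w$ is of maximal homogeneous type, Lemma~\ref{lem:maximal-centralizer} gives $u\in C_G(w)=E[w]$, which is excluded; the same holds for $u$. Hence both $u$ and $w$ are of rank-two type, i.e.\ $L$-type or $X^2$-type.
\begin{itemize}
\item Two $L$-type elements both generate $E[z]$, so $u\in E[w]$, which is excluded.
\item If both are $X^2$-type, commuting gives $b_0/c_0\in L$, and $b_0/c_0\notin K$ by the exclusion. Then $uw^{-1}$ is $L$-type, and we can re-pair to one $L$-type and one $X^2$-type element generating the same algebra.
\end{itemize}
So we reduce to the situation where $E[u,w]=E[z,w']$ with $w'$ of $X^2$-type. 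For an $L$-type leading term $b_0X^{4k}$ with $b_0=\alpha+\beta z$, we subtract the central lift $\alpha x^{4k}$ from $B$. This leaves $[B,C]$ and $v(B)$ unchanged and makes $b_0=\beta z$. Then $u^2=\beta^2 sT^{2k}\in E^\times$, and $w^2\in E^\times$ by the $X^2$-type condition. By Lemma~\ref{lem:biquadratic-centralizer}, $C_G(u,w)=E[z]\oplus E[z]w$, which is commutative.

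Next, we apply Lemma~\ref{lem:obstruction} with $N=-2-d\ge1$. The coefficients of $A$ in degrees $d,\dots,-3$ vanish, so $R_k=0$ for $0\le k\le N-1$. The coefficient of $x^{-2}$ gives $R_N=1+z$. Since $m+n\equiv 2\pmod 4$ in the surviving configuration, $N\equiv0\pmod 4$. The lemma then yields
\[
1+z\in\{\xi\sigma^{m}(c_0)-c_0\sigma^{n}(\xi)\}+\{b_0\sigma^m(\eta)-\eta\sigma^{n}(b_0)\},
\]
which is a sum of images of maps of the form $f^{i,j}_a$.

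The main obstacle is to show that $1+z$ does not lie in this sum. I would first try Lemma~\ref{lem:image-observation}: it computes the orthogonal complement of each image as a space $E_a^{i,j}$. The intersection of these two complements contains a nonzero $\lambda$ built from $b_0$, $c_0$ and $z$ (I expect something like $\lambda=1$ or $\lambda=z$ times a norm). The goal is to check that $\operatorname{Tr}_{F/K}(\lambda(1+z))\ne0$, giving the contradiction.

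If the leading configuration instead forces odd exponents, Lemma~\ref{lem:odd-obstruction} is the tool meant to rule those out. There the requirement $1+z\ne f_a^{p,p}(b)$ should be exactly the residual condition after the images are combined. Any case not covered by these two tools is where I expect the argument to need extra work.
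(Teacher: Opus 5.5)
Your set-up follows the paper up to the application of Lemma~\ref{lem:obstruction}: existence of a maximal pair, $[u,w]=0$, the central-lift exclusion, the reduction to rank-two types, and the centralizer hypotheses. But the proof stops exactly where the contradiction has to be produced. You leave ``$1+z$ is not in the sum of the two sets'' as an expectation, and that step is the substance of the proposition.

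What is missing is an explicit computation of the two sets together with one arithmetic fact. In the mixed case ($m\equiv0$, $n\equiv2$, $N\equiv0\pmod 4$):
\begin{itemize}
\item the first set is $c_0\{\xi-\tau(\xi):\xi\in F\}=c_0Ly$;
\item the second set is $\{b_0\eta-\eta\tau(b_0)\}=\{0\}$, because $b_0\in L$.
\end{itemize}
Suppose $1+z=c_0\ell y$ with $\ell\in L$. Applying $\tau$ and multiplying the two equalities gives $(1+z)^2/z=-c_0\tau(c_0)\ell^2\in K^{\times}L^{\times2}$, so $z\in K^{\times}L^{\times2}$. Taking $\operatorname{N}_{L/K}$ gives $-s\in K^{\times2}$, hence $s\in K^{\times2}$ since $-1=\iota^2$. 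This contradicts the non-squareness of $s$ proved in \S\ref{sec:counterexample}.

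Nothing in your proposal supplies this fact, and your dual formulation needs it too. The orthogonal complement of $c_0Ly$ is $c_0^{-1}L=\tau(c_0)L$, so $\lambda=1$ or $\lambda=z$ is not available unless $c_0\in L$. Moreover, $\operatorname{Tr}_{F/K}(\tau(c_0)\ell(1+z))=\operatorname{Tr}_{L/K}(\ell(1+z)\operatorname{Tr}_{F/L}(c_0))$ is nonzero for some $\ell\in L$ exactly when $c_0\notin Ly$. That is again the statement that no element of $Ly$ has $F/L$-norm in $K^{\times}$.

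Second, the case where both leading terms are of $X^2$-type is never actually treated. Your re-pairing $uw^{-1}=(b_0/c_0)X^{m-n}$ correctly gives $E[u,w]=E[z,w]$, which is what the centralizer hypothesis needs. It does not change $B$ and $C$, however, since replacing $B$ by $BC^{-1}$ does not preserve $[B,C]$. Lemma~\ref{lem:obstruction} is therefore applied with the true degrees $m\equiv n\equiv2\pmod 4$. There $m+n\equiv0$ and $N\equiv2\pmod 4$, so your claim that $N\equiv0\pmod 4$ in the surviving configuration is false in this case.

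You must compute the sets separately here:
\begin{itemize}
\item the first set is again $c_0\{\xi-\tau(\xi)\}=c_0Ly$;
\item the second is $b_0\{\tau(\eta)-\eta\}=\rho c_0Ly=c_0Ly$, since $\rho=b_0/c_0\in L^{\times}$.
\end{itemize}
After that the same norm argument applies.

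Your closing remark about odd exponents is misplaced. Odd-degree noncentral homogeneous elements are of maximal type and were already excluded, and Lemma~\ref{lem:odd-obstruction} is used only in the $d=-2$ analysis of \S\ref{sec:minimal}. A minor point: state the central-lift exclusion for an arbitrary $u\in E[w]$, whose degree-$m$ part may be a finite sum $\sum\lambda_{q,j}T^qw^j$ rather than a single monomial. The same lift works.
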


\begin{proof}
By \eqref{eq:comm-val}, $d\le v(A)=-2$, and the set of attainable
values of $d$ is a nonempty subset of $\{\dots,-3,-2\}$ bounded
above, so a maximum exists. Assume $d<-2$. Write 
\[
B=\sum_{i\ge0}b_{i}x^{m+i},C=\sum_{j\ge0}c_{j}x^{n+j},m=v(B),n=v(C),
\]
and set $N=-2-d>0$, $u=\operatorname{in}(B)=b_{0}X^{m}$, $w=\operatorname{in}(C)=c_{0}X^{n}$.
For each $k\ge0$, the coefficient of $x^{d+k}$ in the commutator
$[B,C]$ is 
\[
R_{k}=\sum_{i+j=k}\bigl(b_{i}\sigma^{m+i}(c_{j})-c_{j}\sigma^{n+j}(b_{i})\bigr).
\]
Indeed, the summand $[b_{i}x^{m+i},c_{j}x^{n+j}]$ contributes to
degree $(m+i)+(n+j)=d+k$ exactly when $i+j=k$. Since $[B,C]=A$
and $A=(1+z)x^{-2}+\frac{r}{2}z$ has no terms in the degrees $d,d+1,\ldots,-3$,
the coefficients in those degrees vanish. As $N=-2-d$, this gives
\[
R_{0}=R_{1}=\cdots=R_{N-1}=0.
\]
The first nonzero degree of $A$ is $-2=d+N$, and its coefficient
there is $1+z$; hence 
\[
R_{N}=1+z.
\]
In particular $R_{0}=0$, so $[u,w]=R_{0}X^{d}=0$ in $G$.

\emph{Reduction to the rigid case.} First suppose that $u\in E[w]$.
Decompose the coefficients in $E=K[T,T^{-1}]$ into $K$-multiples
of powers of $T$. Since $u$ is homogeneous of degree $m$ and $w$
is homogeneous of degree $n$, the degree-$m$ part of such an expression
gives a finite formula 
\begin{equation}
u=\sum_{(q,j)\in I}\lambda_{q,j}T^{q}w^{j}.\label{eq:central-lift-expression}
\end{equation}
Here $\lambda_{q,j}\in K$, and $I\subset\mathbb{Z}\times\mathbb{Z}_{\ge0}$
is finite and every nonzero summand satisfies 
\[
4q+jn=m.
\]
Lift \eqref{eq:central-lift-expression} to $D$ by putting 
\[
Y=\sum_{(q,j)\in I}\lambda_{q,j}x^{4q}C^{j}.
\]
Because $x^{4}$ is central and each $C^{j}$ commutes with $C$,
we have $Y\in C_{D}(C)$. Also $v(x^{4q}C^{j})=4q+jn=m$ for each
summand, and the initial term of $Y$ is exactly the right-hand side
of \eqref{eq:central-lift-expression}, namely $u$. Hence replacing
$B$ by $B-Y$ raises $v(B)$ while preserving $[B,C]$, contradicting
maximality. Therefore $u\notin E[w]$, and by symmetry $w\notin E[u]$.

In particular neither is central, and neither is of maximal homogeneous
type. Indeed, if $u$ were maximal, Lemma~\ref{lem:maximal-centralizer}
and $[u,w]=0$ would give $w\in C_{G}(u)=E[u]$, contradicting $w\notin E[u]$;
the same argument applies to $w$. Therefore $E[u]$ and $E[w]$ are
distinct rank-two commutative $E$-algebras, each of $L$-type or
$X^{2}$-type. If both were $L$-type, then $E[u]=E[z]=E[w]$,
contradicting distinctness. After replacing $(B,C)$ by $(C,-B)$
if necessary, exactly one of the two cases occurs: 
\[
\text{(i) both }X^{2}\text{-type:}m\equiv n\equiv2\pmod 4,b_{0}=\rho c_{0},0\ne\rho\in Kz,c_{0}\tau(c_{0})\in K^{\times};
\]
\[
\text{(ii) mixed:}m\equiv0\pmod 4,n\equiv2\pmod 4,b_{0}=z,c_{0}\tau(c_{0})\in K^{\times}.
\]
(In case (i), the equality $[u,w]=0$ gives $b_{0}/c_{0}\in L$; in
case (ii), it gives $b_{0}=\tau(b_{0})$, hence $b_{0}\in L$. One
first removes the $K$-part by the same central-lift argument. In
the mixed case one then uses an inverse scalar rescaling $(B,C)\mapsto(\kappa^{-1}B,\kappa C)$,
which preserves $[B,C]$, to make the remaining $L$-type coefficient
exactly $z$.)

We now check the hypotheses on $H=E[u,w]$ explicitly. In the mixed
case, $u=zT^{m/4}$, so $E[u,w]=E[z,w]$. In the case where both terms
are of $X^{2}$-type, write $\rho=\kappa z$ with $0\ne\kappa\in K$
and $m-n=4l$. Since $u=\kappa zT^{l}w$ and $w^{-1}\in E[w]$, we
again get $E[u,w]=E[z,w]$. Thus in both cases 
\[
H=E[z,w].
\]
Moreover 
\[
w^{2}=c_{0}\tau(c_{0})T^{n/2}\in E^{\times}.
\]
In the mixed case $u=zT^{m/4}$, so $u^{2}\in E^{\times}$; in the
case where both terms are of $X^{2}$-type, $u=\kappa zT^{l}w$, so
again $u^{2}\in E^{\times}$. Finally, Lemma~\ref{lem:biquadratic-centralizer}
gives directly 
\[
C_{G}(u,w)=C_{G}(E[u,w])=C_{G}(z,w)=E[z,w]=H.
\]
In particular this simultaneous centralizer is commutative.

\emph{The contradiction.} Lemma~\ref{lem:obstruction} applies, so
$R_{N}=1+z$ lies in 
\[
\{\xi\sigma^{m+N}(c_{0})-c_{0}\sigma^{n}(\xi):\xi\in F\}+\{b_{0}\sigma^{m}(\eta)-\eta\sigma^{n+N}(b_{0}):\eta\in F\}.
\]
In case (i), $N\equiv2\pmod 4$, $m+N\equiv0$ and $n\equiv2$, so
the first set is $c_{0}\bigl(\xi-\tau(\xi)\bigr)\subseteq c_{0}Ly$;
and $b_{0}=\rho c_{0}$ with $\rho\in L$ makes the second set $b_{0}\bigl(\tau(\eta)-\eta\bigr)\subseteq c_{0}Ly$
as well. In case (ii), $N\equiv0\pmod 4$, the first set is again
$c_{0}(\xi-\tau(\xi))\subseteq c_{0}Ly$, while the second is $\{z\eta-\eta\sigma^{n+N}(z):\eta\in F\}=\{0\}$:
here $\sigma^{m}=\operatorname{id}$ since $m\equiv0\pmod 4$, and $\sigma^{n+N}(z)=\sigma^{2}(z)=z$
since $n+N\equiv2\pmod 4$ and $\sigma^{2}$ fixes $L\ni z$, so $z\eta-\eta\sigma^{n+N}(z)=z\eta-\eta z=0$.
Either way $1+z\in c_{0}Ly$.

But this is impossible. If $1+z=c_{0}\ell y$ with $\ell\in L$, applying
$\tau$ (which fixes $L$ and sends $y\mapsto-y$) gives $1+z=-\tau(c_{0})\ell y$,
and multiplying the two equalities, 
\[
\frac{(1+z)^{2}}{z}=-c_{0}\tau(c_{0})\ell^{2}\in K^{\times}L^{\times2}.
\]
This is impossible by a norm argument. Indeed, if $(1+z)^{2}/z=\kappa q^{2}$
with $\kappa\in K^{\times}$ and $q\in L^{\times}$, then $z\in K^{\times}L^{\times2}$.
Applying $\operatorname{N}_{L/K}$ gives $-s=\operatorname{N}_{L/K}(z)\in K^{\times2}$.
Since $-1=\iota^{2}$ is already a square in $K$, this would force
$s\in K^{\times2}$, contradicting the nonsquareness of $s$ proved
at the beginning of \S\ref{sec:counterexample}. This contradiction
shows $d=-2$. 
\end{proof}

\section{The minimal computation}

\label{sec:minimal}

Suppose $A=[B,C]$. By Proposition~\ref{prop:gap} we may choose
$B,C$ with 
\[
v(B)+v(C)=-2.
\]
Write $m=v(B)$, $n=v(C)$, and 
\[
\operatorname{in}(B)=b_{0}X^{m},\operatorname{in}(C)=c_{0}X^{n}.
\]
The coefficient of degree $-2$ in $[B,C]$ is 
\[
b_{0}\sigma^{m}(c_{0})-c_{0}\sigma^{n}(b_{0})=1+z.
\]
Since $m+n=-2$, the residues of $(m,n)$ modulo $4$ are one of 
\[
(0,2),(2,0),(1,1),(3,3).
\]
The two odd cases are impossible. Indeed, the case $(m,n)\equiv(1,1)\pmod 4$
gives 
\[
1+z=f_{b_{0}}^{1,1}(c_{0}),
\]
and the case $(m,n)\equiv(3,3)\pmod 4$ gives 
\[
1+z=f_{b_{0}}^{3,3}(c_{0}).
\]
Both alternatives contradict Lemma~\ref{lem:odd-obstruction}. Hence
one valuation is congruent to $0$ modulo $4$ and the other to $2$
modulo $4$. Replacing $(B,C)$ by $(C,-B)$ if necessary, and multiplying
one factor by a central power of $x^{4}$ and the other by the inverse
central power, we may therefore assume 
\[
v(B)=0,v(C)=-2.
\]
Thus 
\[
B=a+px+b_{2}x^{2}+\cdots,C=cx^{-2}+hx^{-1}+\cdots.
\]
The coefficient of $x^{-2}$ in $[B,C]$ gives 
\[
1+z=(a-\sigma^{2}(a))c.
\]

Write $a=a_{0}+\lambda y$ with $a_{0},\lambda\in L$. Then $a-\sigma^{2}(a)=2\lambda y$,
so $\lambda\ne0$ and 
\[
c=\frac{1+z}{2\lambda y}\in Ly,\sigma^{2}(c)=-c.
\]
If $c^{2}\in K$, then 
\[
\frac{(1+z)^{2}}{z}=4c^{2}\lambda^{2}\in K^{\times}L^{\times2},
\]
which is impossible by the norm argument used in \S\ref{sec:reduction};
hence $K(c^{2})=L$ and we may write $a_{0}=\mu-\nu c^{2}$ with $\mu,\nu\in K$.
Now $Y=\mu+\nu x^{4}C^{2}\in C_{D}(C)$ has initial term $\mu+\nu c\sigma^{2}(c)=\mu-\nu c^{2}=a_{0}$;
replacing $B$ by $B-Y$ leaves $[B,C]$ unchanged and removes $a_{0}$.
So we may assume 
\[
\begin{aligned}a & =\lambda y,\\
\lambda & =\alpha+\beta z,\\
c & =\gamma y,\\
\gamma & =\frac{1+z}{2\lambda z}\in L.
\end{aligned}
\]
Here $\alpha,\beta\in K$.

Write $p=P+Qy$ with $P,Q\in L$.  Since $\sigma^{-1}(y)=-\iota y$ and $\sigma^{-1}(\ell)=\overline{\ell}$
for $\ell\in L$, we have
\[
\sigma^{-1}(a)=-\iota\overline{\lambda}y.
\]
Hence
\[
a-\sigma^{-1}(a)
=
\bigl(\lambda+\iota\overline{\lambda}\bigr)y.
\]
Put
\[
d=\lambda+\iota\overline{\lambda}.
\]
Since $\lambda=\alpha+\beta z$,
\[
d=(1+\iota)\alpha+(1-\iota)\beta z.
\]
Thus $d\ne0$: otherwise the $K$-linear independence of $1$ and $z$
would give $\alpha=\beta=0$, contradicting $\lambda\ne0$.

The element $A$ has no term of degree $-1$.  Hence the coefficient
of $x^{-1}$ in $[B,C]=A$ must vanish, and therefore
\[
(a-\sigma^{-1}(a))h
+p\sigma(c)-\sigma^{2}(p)c=0.
\]
Since $a-\sigma^{-1}(a)=dy\ne0$, this equation uniquely determines
$h$:
\begin{equation}
h=-(dy)^{-1}
\bigl(p\sigma(c)-\sigma^{2}(p)c\bigr).
\label{eq:h-determined}
\end{equation}

If $k$ denotes the coefficient of $x^0$ in $C$, then $[a,k]=0$
because $a,k\in F$.  Hence the coefficient of $x^0$ gives
\[
\frac r2z
=
f_p^{1,-1}(h)+f_{b_2}^{2,-2}(c).
\]

Since $\sigma^{2}(c)=-c$ and $\sigma^{-2}=\tau$, one has 
\[
f_{b_{2}}^{2,-2}(c)=-c\bigl(b_{2}+\tau(b_{2})\bigr)\in Ly=\ker(\operatorname{Tr}_{F/L}).
\]
Applying $\operatorname{Tr}_{F/L}$ (which on $L$ is multiplication by $2$)
therefore yields the key identity 
\begin{equation}
rz=\operatorname{Tr}_{F/L}\bigl(f_{p}^{1,-1}(h)\bigr).\label{eq:rz}
\end{equation}

\begin{lemma}
\label{lem:compute} With the notation above, 
\[
\operatorname{Tr}_{F/L}\bigl(f_{p}^{1,-1}(h)\bigr)=-\frac{2}{z(\alpha^{2}+\beta^{2}s)}\bigl(\operatorname{N}_{L/K}(P)+\iota s\operatorname{N}_{L/K}(Q)\bigr).
\]
\end{lemma}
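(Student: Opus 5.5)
This is a direct computation. Everything will be expressed in the basis $F=L\oplus Ly$. The rules needed are
\[
\sigma(\ell)=\overline{\ell},\qquad \sigma(y)=\iota y,\qquad \tau(y)=-y,\qquad y^2=z,
\]
for $\ell\in L$.

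\emph{Step 1: compute $h$.} Since $c=\gamma y$, we have $\sigma(c)=\iota\overline{\gamma}y$. With $p=P+Qy$ and $\sigma^2(p)=P-Qy$,
\[
p\sigma(c)-\sigma^2(p)c=(P\iota\overline{\gamma}-P\gamma)y+(\iota\overline{\gamma}+\gamma)Qz .
\]
Substituting into \eqref{eq:h-determined} and dividing by $dy$ (using $y^{-1}=y/z$) gives
\[
h=h_0+h_1y,\qquad h_0=-\frac{(\iota\overline{\gamma}-\gamma)P}{d},\qquad h_1=-\frac{(\iota\overline{\gamma}+\gamma)Q}{d},
\]
with $h_0,h_1\in L$.

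\emph{Step 2: the partial trace.} By definition,
\[
f_p^{1,-1}(h)=p\sigma(h)-\sigma^{-1}(p)h .
\]
Using $\sigma^{-1}(y)=-\iota y$, we get $\sigma(h)=\overline{h_0}+\iota\overline{h_1}y$ and $\sigma^{-1}(p)=\overline{P}-\iota\overline{Q}y$. The trace $\operatorname{Tr}_{F/L}$ is twice the $L$-component, so
\[
\operatorname{Tr}_{F/L}\bigl(f_p^{1,-1}(h)\bigr)=2\bigl(P\overline{h_0}+\iota Q\overline{h_1}z-\overline{P}h_0+\iota\overline{Q}h_1z\bigr).
\]
Now insert the formulas for $h_0,h_1$. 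The $P$-terms combine into $\operatorname{N}_{L/K}(P)$ times an expression in $\gamma,d$. The $Q$-terms combine into $\operatorname{N}_{L/K}(Q)$ times another such expression. The cross terms vanish automatically, because $h_0$ involves only $P$ and $h_1$ only $Q$.

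\emph{Step 3: simplify the scalar coefficients.} Here we substitute
\[
\gamma=\frac{1+z}{2\lambda z},\qquad \overline{\gamma}=-\frac{1-z}{2\overline{\lambda}z},\qquad d=\lambda+\iota\overline{\lambda},\qquad \overline{d}=\overline{\lambda}-\iota\lambda,
\]
together with $\overline{z}=-z$ and $\operatorname{N}_{L/K}(\lambda)=\alpha^2-\beta^2 s$. The target coefficients are
\[
-\frac{2}{z(\alpha^2+\beta^2s)}\quad\text{for }\operatorname{N}_{L/K}(P),\qquad -\frac{2\iota s}{z(\alpha^2+\beta^2s)}\quad\text{for }\operatorname{N}_{L/K}(Q).
\]
The denominator $\alpha^2+\beta^2 s$ should arise from
\[
d\overline{d}=(\lambda+\iota\overline{\lambda})(\overline{\lambda}-\iota\lambda)=\lambda\overline{\lambda}-\iota(\lambda^2-\overline{\lambda}^2)+\lambda\overline{\lambda},
\]
combined with the $\lambda$-denominators of $\gamma$. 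Alternatively, $d=(1+\iota)\alpha+(1-\iota)\beta z$ gives
\[
d\overline{d}=2\iota\,(\alpha^2+\beta^2 s)\cdot(\text{unit}),
\]
to be checked. Each bracket is brought over the common denominator $\lambda\overline{\lambda}d$ or $\overline{d}$. The numerator is expanded in the basis $1,z$, with $\lambda=\alpha+\beta z$ and $z^2=s$.

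The main obstacle is this final algebraic simplification. It requires tracking signs from $\overline{z}=-z$, powers of $\iota$, and the conjugate denominators $d$ versus $\overline{d}$. To keep it manageable, first verify the special case $\beta=0$, so that $\lambda=\alpha\in K$ and $d=(1+\iota)\alpha$; this fixes the constants. Then do the general expansion by multiplying numerator and denominator by $\overline{d}$, so that every denominator becomes the element $d\overline{d}\in K$.
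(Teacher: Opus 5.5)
Your Steps 1 and 2 follow the paper's proof exactly. You correctly obtain $h=h_0+h_1y$ with $h_0=-P\,d^{-1}(\iota\overline{\gamma}-\gamma)$ and $h_1=-Q\,d^{-1}(\iota\overline{\gamma}+\gamma)$, together with the $L$-part $P\overline{h_0}-\overline{P}h_0+\iota z(Q\overline{h_1}+\overline{Q}h_1)$. Put $A_1=d^{-1}(\iota\overline{\gamma}-\gamma)$ and $A_2=d^{-1}(\gamma+\iota\overline{\gamma})$. Then the trace is $2\operatorname{N}_{L/K}(P)(A_1-\overline{A_1})-2\iota z\operatorname{N}_{L/K}(Q)(A_2+\overline{A_2})$. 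The entire content of the lemma is therefore the two scalar identities $A_1-\overline{A_1}=-1/(z(\alpha^2+\beta^2s))$ and $A_2+\overline{A_2}=1/(\alpha^2+\beta^2s)$. Your Step 3 does not prove them: it outlines a plan, calls it the main obstacle, and leaves a factor ``$(\text{unit})$, to be checked.'' As written, the lemma is not established.

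The one concrete input you give for Step 3 is also wrong. The bar is the nontrivial automorphism of $L/K$ and $\iota\in K$, so $\overline{d}=\overline{\lambda}+\iota\lambda$, not $\overline{\lambda}-\iota\lambda$. Your own test case shows this: for $\beta=0$, $d=(1+\iota)\alpha$ lies in $K$ and must be fixed by the bar. With your $\overline{d}$, the product $d\overline{d}=2(\alpha^2-\beta^2s)-4\iota\alpha\beta z$ is not in $K$, so clearing denominators to $d\overline{d}\in K$ fails. The correct value is $\operatorname{N}_{L/K}(d)=\iota(\lambda^2+\overline{\lambda}^2)=2\iota(\alpha^2+\beta^2s)$ exactly, with no extra unit.

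The idea that finishes the proof is the pair of identities $d+\iota\overline{d}=2\iota\overline{\lambda}$ and $\overline{d}+\iota d=2\iota\lambda$. Combined with $d^{-1}=\overline{d}/\operatorname{N}_{L/K}(d)$, they give $A_1-\overline{A_1}=\frac{2\iota}{\operatorname{N}_{L/K}(d)}(\overline{\gamma}\,\overline{\lambda}-\gamma\lambda)$ and $A_2+\overline{A_2}=\frac{2\iota}{\operatorname{N}_{L/K}(d)}(\gamma\lambda+\overline{\gamma}\,\overline{\lambda})$. Thus $\gamma$ enters only through $\gamma\lambda=(1+z)/(2z)$ and $\overline{\gamma}\,\overline{\lambda}=(z-1)/(2z)$, whose difference is $-1/z$ and whose sum is $1$. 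No expansion of the $\lambda$-denominators in the basis $1,z$ is needed.
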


\begin{proof}
Recall that
\[
d=\lambda+\iota\overline{\lambda}\ne0,
\qquad
c=\gamma y,
\qquad
\gamma=\frac{1+z}{2\lambda z}.
\]
A direct calculation gives
\[
\operatorname{N}_{L/K}(d)
=
d\overline d
=
2\iota(\alpha^{2}+\beta^{2}s).
\]
Moreover,
\begin{equation}
\begin{aligned}
\gamma\lambda
&=\frac{1+z}{2z},\\
\overline{\gamma}\,\overline{\lambda}
&=\frac{z-1}{2z}.
\end{aligned}
\label{eq:gl}
\end{equation}

Write $h=H+Ry$ with $H,R\in L$. Using 
\[
\sigma(c)=\iota\overline{\gamma}y\qquad\text{and}\qquad\sigma^{2}(p)=P-Qy,
\]
we obtain 
\[
p\sigma(c)-\sigma^{2}(p)c=Qz(\gamma+\iota\overline{\gamma})+P(\iota\overline{\gamma}-\gamma)y.
\]
Multiplying $(dy)^{-1}=d^{-1}y/z$ into the relation 
\[
(dy)h=-(p\sigma(c)-\sigma^{2}(p)c)
\]
gives 
\[
H=-PA_{1},\qquad R=-QA_{2},
\]
where 
\[
\begin{aligned}A_{1} & =d^{-1}(\iota\overline{\gamma}-\gamma),\\
A_{2} & =d^{-1}(\gamma+\iota\overline{\gamma}).
\end{aligned}
\]

Now 
\[
\sigma(h)=\overline{H}+\iota\overline{R}y\qquad\text{and}\qquad\sigma^{-1}(p)=\overline{P}-\iota\overline{Q}y.
\]
Thus the $L$-part of 
\[
f_{p}^{1,-1}(h)=p\sigma(h)-\sigma^{-1}(p)h
\]
is 
\[
P\overline{H}-\overline{P}H+\iota z(Q\overline{R}+\overline{Q}R).
\]
Since $\operatorname{Tr}_{F/L}(U+Vy)=2U$ for all $U,V\in L$, and $H=-PA_{1}$,
$R=-QA_{2}$, we obtain 
\begin{equation}
\operatorname{Tr}_{F/L}\bigl(f_{p}^{1,-1}(h)\bigr)=2\operatorname{N}_{L/K}(P)(A_{1}-\overline{A_{1}})-2\iota z\operatorname{N}_{L/K}(Q)(A_{2}+\overline{A_{2}}).\label{eq:half}
\end{equation}

It remains to evaluate the two combinations. Since 
\[
\overline{d}=\overline{\lambda}+\iota\lambda,
\]
we have 
\[
\begin{aligned}\iota\overline{d}+d & =2\iota\overline{\lambda},\\
\iota d+\overline{d} & =2\iota\lambda.
\end{aligned}
\]
Using 
\[
d^{-1}=\frac{\overline{d}}{\operatorname{N}_{L/K}(d)}\qquad\text{and}\qquad\overline{d}^{-1}=\frac{d}{\operatorname{N}_{L/K}(d)},
\]
we obtain 
\begin{align*}
A_{1}-\overline{A_{1}} & =\frac{1}{\operatorname{N}_{L/K}(d)}\bigl[\overline{\gamma}(\iota\overline{d}+d)-\gamma(\iota d+\overline{d})\bigr]\\
 & =\frac{2\iota}{\operatorname{N}_{L/K}(d)}\bigl(\overline{\gamma}\overline{\lambda}-\gamma\lambda\bigr),\\
A_{2}+\overline{A_{2}} & =\frac{1}{\operatorname{N}_{L/K}(d)}\bigl[\gamma(\iota d+\overline{d})+\overline{\gamma}(\iota\overline{d}+d)\bigr]\\
 & =\frac{2\iota}{\operatorname{N}_{L/K}(d)}\bigl(\gamma\lambda+\overline{\gamma}\overline{\lambda}\bigr).
\end{align*}
By \eqref{eq:gl}, 
\[
\overline{\gamma}\overline{\lambda}-\gamma\lambda=-\frac{1}{z},\qquad\gamma\lambda+\overline{\gamma}\overline{\lambda}=1,
\]
and 
\[
\operatorname{N}_{L/K}(d)=2\iota(\alpha^{2}+\beta^{2}s).
\]
Therefore 
\[
\begin{aligned}A_{1}-\overline{A_{1}} & =-\frac{1}{z(\alpha^{2}+\beta^{2}s)},\\
A_{2}+\overline{A_{2}} & =\frac{1}{\alpha^{2}+\beta^{2}s}.
\end{aligned}
\]
Substituting these identities into \eqref{eq:half} and using $z^{2}=s$,
we obtain 
\begin{align*}
\operatorname{Tr}_{F/L}\bigl(f_{p}^{1,-1}(h)\bigr) & =-\frac{2}{\alpha^{2}+\beta^{2}s}\left(\frac{\operatorname{N}_{L/K}(P)}{z}+\iota z\operatorname{N}_{L/K}(Q)\right)\\
 & =-\frac{2}{z(\alpha^{2}+\beta^{2}s)}\bigl(\operatorname{N}_{L/K}(P)+\iota s\operatorname{N}_{L/K}(Q)\bigr).
\end{align*}
\end{proof}
Combining \eqref{eq:rz} with Lemma~\ref{lem:compute} gives
\begin{equation}
rz
=
-\frac{2}{z(\alpha^{2}+\beta^{2}s)}
\bigl(
\operatorname{N}_{L/K}(P)
+\iota s\operatorname{N}_{L/K}(Q)
\bigr).
\label{eq:parity-identity}
\end{equation}

\section{The parity contradiction}

\label{sec:parity}

We use diagonal notation $\langle a_{1},\dots,a_{m}\rangle$ for $\sum a_{i}X_{i}^{2}$.
Write $P=P_{0}+P_{1}z$ and $Q=Q_{0}+Q_{1}z$, where $P_{i},Q_{i}\in K$.
Then 
\[
\begin{aligned}\operatorname{N}_{L/K}(P)+\iota s\operatorname{N}_{L/K}(Q) & =P_{0}^{2}-sP_{1}^{2}+\iota sQ_{0}^{2}-\iota s^{2}Q_{1}^{2}\\
 & =\varphi(P_{0},P_{1},Q_{0},Q_{1}),
\end{aligned}
\]
where 
\[
\varphi=\langle1,-s,\iota s,-\iota s^{2}\rangle.
\]
Moreover, $\alpha^{2}+\beta^{2}s$ is a value of $\langle1,s\rangle$.
We call a quadratic form \emph{anisotropic} if it has no nontrivial
zero. We use Springer's theorem in the form given by \cite[Chapter~VI, Proposition~1.9]{Lam}.
Thus, over a nondyadic complete discretely valued field with uniformizer
$\pi$, a diagonal form with unit coefficients is anisotropic if and
only if its reduction is anisotropic, and a form $q_{0}\perp\pi q_{1}$
with unit coefficients is anisotropic if and only if the reductions
of $q_{0}$ and $q_{1}$ are both anisotropic. We shall also use the
following immediate consequence: if a diagonal form with unit coefficients
has anisotropic reduction, then every nonzero value that it represents
has even valuation. Indeed, after factoring the smallest power $\pi^{e}$
from the coordinates of a representing vector, the remaining primitive
vector has nonzero value modulo $\pi$, so the represented value has
valuation $2e$.

Apply this first over $K_{0}=\mathbb{Q}(\iota)((s))$ with uniformizer
$s$. For $\langle1,s\rangle=\langle1\rangle\perp s\langle1\rangle$,
the two residue forms are $\langle1\rangle$ over $\mathbb{Q}(\iota)$
and hence are anisotropic. Thus $\langle1,s\rangle$ is anisotropic
over $K_{0}$. For $\varphi=\langle1,-s,\iota s,-\iota s^{2}\rangle$,
replacing $-\iota s^{2}$ by $-\iota$ (a square multiple) gives 
\[
\varphi\cong\langle1,-\iota\rangle\perp s\langle-1,\iota\rangle.
\]
The residue forms $\langle1,-\iota\rangle$ and
$\langle-1,\iota\rangle$ are anisotropic over $\mathbb{Q}(\iota)$:
a nontrivial zero of either form would imply that $\iota$ is a
square in $\mathbb{Q}(\iota)$, contrary to
Lemma~\ref{lem:iota-nonsquare}.
Hence $\varphi$ is anisotropic over $K_{0}$.

Now pass to $K=K_{0}((r))$ with uniformizer $r$. The forms $\langle1,s\rangle$
and $\varphi$ have coefficients in $K_{0}$, hence unit coefficients
for the $r$-adic valuation, and their reductions are the same anisotropic
forms over $K_{0}$. By the preceding consequence of \cite[Chapter~VI, Proposition~1.9]{Lam},
every nonzero value of either form over $K$ has even $r$-adic valuation.
We use the same notation $v_{r}$ for the unramified extension of
this valuation to $L=K(z)=\mathbb{Q}(\iota)((z))((r))$; in particular
$v_{r}(z)=0$.

In \eqref{eq:parity-identity}, the scalar $-\tfrac{2}{z(\alpha^{2}+\beta^{2}s)}$
has even $r$-valuation ($v_{r}(2)=v_{r}(z)=0$ and $\alpha^{2}+\beta^{2}s$
is a value of $\langle1,s\rangle$), and the bracket $\operatorname{N}_{L/K}(P)+\iota s\operatorname{N}_{L/K}(Q)$
is a value of $\varphi$. Hence the right-hand side, if nonzero, has
even $r$-adic valuation; and it cannot vanish, since the left-hand
side $rz$ is nonzero. But $rz$ has $r$-adic valuation $1$. This
contradiction is the proof. 
\begin{theorem}
\label{thm:main} Let $K=\mathbb{Q}(\iota)((s))((r))$, $F=K(y)$
with $y^{4}=s$ and $\sigma(y)=\iota y$. Then 
\[
A=(1+y^{2})x^{-2}+\frac{r}{2}y^{2}
\]
has reduced trace zero in the degree-four central division algebra
$D=F((x;\sigma))$, but is not a single commutator. 
\end{theorem}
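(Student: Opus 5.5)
The theorem is essentially an assembly of results already proved, so the plan is to collect them in order. First, $D$ is a central division algebra of degree four. By the construction at the start of \S\ref{sec:counterexample}, $X^4-s$ is irreducible over $K$. Since $\iota\in K$, the extension $F=K(y)$ is cyclic of degree $4$, generated by $\sigma(y)=\iota y$. The general facts recalled in \S\ref{sec:observations} then give that $D=F((x;\sigma))$ is a division ring with $Z(D)=K((x^4))$ and $[D:Z(D)]=16$. Second, the reduced trace of $A$ is zero. Writing $z=y^2$, we have $A=(1+z)x^{-2}+\frac r2 z$. By \eqref{eq:trd}, only the coefficient $\frac r2 z$ in degree $0$ contributes, and its trace $\operatorname{Tr}_{F/K}(\frac r2 z)=\operatorname{Tr}_{L/K}(rz)=0$ because $\overline z=-z$. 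This computation is already recorded just before Lemma~\ref{lem:iota-nonsquare}.

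For the non-commutator claim, argue by contradiction and suppose $A=[B,C]$. Proposition~\ref{prop:gap} lets us choose $B,C$ with $v(B)+v(C)=-2$. In \S\ref{sec:minimal}, Lemma~\ref{lem:odd-obstruction} excludes the odd residue pairs $(1,1)$ and $(3,3)$ modulo $4$. Swapping $(B,C)\mapsto(C,-B)$ and rescaling by central powers of $x^4$, we may take $v(B)=0$ and $v(C)=-2$. The degree $-2$ coefficient equation forces $a=a_0+\lambda y$ and $c=\frac{1+z}{2\lambda y}$. The norm argument shows $K(c^2)=L$, so subtracting an element of $C_D(C)$ removes $a_0$ and leaves $\lambda=\alpha+\beta z$. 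The degree $-1$ equation determines $h$ by \eqref{eq:h-determined}. Applying $\operatorname{Tr}_{F/L}$ to the degree $0$ equation kills the $b_2$ contribution, which lies in $Ly$, and gives \eqref{eq:rz}. Lemma~\ref{lem:compute} then turns this into \eqref{eq:parity-identity}.

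The last step is the parity contradiction of \S\ref{sec:parity}. Springer's theorem is applied twice: over $K_0$ with uniformizer $s$, and then over $K$ with uniformizer $r$, using Lemma~\ref{lem:iota-nonsquare} for the residue forms. This shows that $\langle1,s\rangle$ and $\varphi=\langle 1,-s,\iota s,-\iota s^2\rangle$ are anisotropic with unit coefficients, so every nonzero value of either has even $v_r$. Since $v_r(2)=v_r(z)=0$, the right side of \eqref{eq:parity-identity} has even $r$-adic valuation, or is zero. But $v_r(rz)=1$, which is a contradiction.

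I expect the main obstacle to be logical rather than computational. One must check that all the normalizations made along the way preserve $[B,C]=A$, and that they never disturb the condition $v(B)+v(C)=-2$ or the vanishing of the coefficients in degrees below $-2$. These normalizations are the central-lift subtractions, the swap $(C,-B)$, and the rescalings $(\kappa^{-1}B,\kappa C)$ and $(x^{4k}B,x^{-4k}C)$. Each is either a change by an element of a centralizer or a scaling by central units with inverse factors, so all of them are harmless, and the theorem follows.
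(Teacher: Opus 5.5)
Your proposal is correct and follows the same route as the paper's proof. You check reduced trace zero via \eqref{eq:trd}, reduce to $v(B)+v(C)=-2$ by Proposition~\ref{prop:gap}, and run the minimal computation of \S\ref{sec:minimal} to reach \eqref{eq:parity-identity}, excluding the odd residues by Lemma~\ref{lem:odd-obstruction} and normalizing to $v(B)=0$, $v(C)=-2$. The Springer-based parity contradiction of \S\ref{sec:parity} then finishes it, and your remark that each normalization (central-lift subtraction, the swap $(C,-B)$, inverse central rescalings) preserves $[B,C]=A$ makes explicit what the paper uses implicitly.
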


\begin{proof}
Reduced trace zero was checked in \S\ref{sec:counterexample}. If
$A=[B,C]$, then \S\ref{sec:reduction} reduces to a minimal pair,
\S\ref{sec:minimal} yields \eqref{eq:parity-identity}, and \S\ref{sec:parity}
shows its two sides have $r$-adic valuations of opposite parity.
Hence no such $B,C$ exist. 
\end{proof}

\section*{Declaration of generative AI and AI-assisted technologies in the
manuscript preparation process}

During the preparation of this work, the author used ChatGPT-5 and Claude Opus as generative-AI tools for exploratory, verification, and editorial purposes. The author independently obtained the observations presented in Section 2 and, from them, conjectured the existence of counterexamples. ChatGPT-5 was subsequently used to assist in the search for explicit candidate counterexamples. The reduction argument in Section 4 was developed by the author, with ChatGPT-5 used to assist with calculations and to check intermediate steps. During the preparation of the final manuscript, ChatGPT-5 was also used for language polishing and grammatical correction. Claude Opus was used only for independent checking of the arguments and manuscript, including the detection of notation collisions and other consistency issues. The author reviewed and independently verified all mathematical arguments and all AI-assisted material, edited the resulting text as necessary, and takes full responsibility for the content of the article.






\end{document}